\definecolor{Red}{cmyk}{0,1,1,0}
\definecolor{verde}{cmyk}{1,0,1,0}
\definecolor{loka}{cmyk}{.5,0,1,.5}
\definecolor{azul}{cmyk}{1,1,0,0}
\numberwithin{equation}{section}
\newcommand{\be}{\begin{equation}}
\newcommand{\ee}{\end{equation}}
\newtheorem{theorem}{Theorem}
\newtheorem{definition}{Definition}
\newtheorem{lemma}{Lemma}
\newtheorem{remark}{Remark}
\begin{document}
\title{Leibniz type rule: $\Psi-$Hilfer fractional derivative}
\author{J. Vanterler da C. Sousa$^1$}
\address{$^1$ Department of Applied Mathematics, Institute of Mathematics,
 Statistics and Scientific Computation, University of Campinas --
UNICAMP, rua S\'ergio Buarque de Holanda 651,
13083--859, Campinas SP, Brazil\newline
\newline
e-mail: {\itshape \texttt{ra160908@ime.unicamp.br, capelas@ime.unicamp.br}}}

\author{E. Capelas de Oliveira$^1$}

\begin{abstract} In this paper, we present the Leibniz rule for the $\Psi-$Hilfer ($\Psi-$H) fractional derivative in two versions, the first in relation to $\Psi-$RL fractional derivative and the second in relation to the $\Psi-$H fractional derivative. In this sense, we present some particular cases of Leibniz rules and Leibniz type rules from the investigated case.

\vskip.5cm
\noindent
\emph{Keywords}:$\Psi-$Hilfer fractional derivative, Leibniz type rule, Leibniz rule.
\newline 
MSC 2010 subject classifications. 26A33; 33A30.
\end{abstract}
\maketitle

\section{Introduction}
The fractional calculus (FC) is the calculation of modernity, where the limitation of the integration and derivation orders to the integers is overcome and the orders of these operators are arbitrarily assigned. FC has become the target of studies around the world, and is not currently considered a highly researched subject, by chance, as the number of articles published in the area and its relationship with other areas has been growing every decade \cite{SAMKO,KSTJ,POD,machado,mainardi,debnath,butzer,por,por1}. 

The FC appeared in the year 1695 \cite{GWL1,GWL2,GWL3}, in an exchange of correspondence between l'H\^opital and Leibniz, even during the construction of the 
classic differential and integral calculus. In these correspondences Leibniz urged the possible generalization of the whole-order derivative to an arbitrary order, 
l'H\^opital then questioned him about the special case where the order of the derivative was 1/2. In the reply letter, dated September 30, 1695, Leibniz presented 
a correct reflection, in which he affirmed that very important consequences would come from these developments \cite{GWL1,GWL2,GWL3}.

Encouraged by this new perspective of FC application, several authors have developed definitions for fractional derivatives and integrals in subsequent decades, 
but some of these definitions have contradicted each other. One of these definitions, which emerged in the nineteenth century; is the proposal by Liouville, 
which was later reformulated by Riemann, promoting the important definitions of the Riemann-Liouville derivative and integral \cite{SAMKO,KSTJ,POD}. 
In this sense, Caputo introduced the so-called Caputo fractional derivative, fudamental in the study of memory effects and also in the modeling of real problems, 
by means of differential equations, since the initial conditions, when using Laplace transform to obtain the solution to analysis of this differential 
equstion, are integers \cite{POD,mainardi,jose,jose1,ten,Amer,Giusti}. 

Among other applications that over time were justifying the relevance of the fractional derivative, numerous definitions of fractional derivatives appeared, 
among which we mention: Hadamard, Weyl, Caputo-Hadamard, Katugampola, Caputo-Katugampola, Hilfer, Hilfer-Hadamard, Hilfer-Katugampola, Jumarie, Erd\'elyi-Kober, 
Riesz, Caputo–Riesz, Cassar, Gr\"unwald-Letnikov, each with its respective importance and application \cite{SAMKO,KSTJ,POD,review,VanterlerOliveira2,jarad,almeida,oliveira}. 
However, when we consider non-integer order derivatives, we have been able, in some studies, to better adapt the theoretical model to the experimental data, 
thus predicting better the future dynamics of the process. One problem that arises in this study is the innumerable definitions of fractional operators, and 
with this it must be taken into account the choice of the best operator for the case under study. One way to overcome this problem is to consider more general 
definitions, where the usual derivatives are particular cases. In this sense, Sousa and Oliveira, introduced the so-called $\Psi-$H fractional derivative 
which contains a wide class of fractional derivatives as particular cases \cite{VanterlerOliveira2}.

Given the large number of definitions of fractional derivatives, the natural question that arises is, what are the conditions that a given differentiation operator 
must satisfy to be considered fractional? Then in 2015 Ortigueira and Machado \cite{ortigueira} published an article, presenting a criterion for a certain differentiable 
operator, to be considered fractional. However, some of those derivatives that have been introduced so far do not satisfy all the conditions of the criterion. The main 
one that many fractional derivatives do not satisfy is the Leibniz rule.

Osler presented some works related to the rule of Leibniz \cite{osler,osler1,osler2,osler3}. On the other hand, Tarasov has also devoted himself over time to investigate 
and present interesting works on the Leibniz rule as well as worked on the violation of the Leibniz rule \cite{tarasov,tarasov1,tarasov2,tarasov3}. 
In this sense, in 2018 Sayevand, Machado and Baleanu, presented a new look on the Leibniz rule for the fractional derivative \cite{sayevand}.

As we already said, Sousa and Oliveira \cite{VanterlerOliveira2} proposed a new fractional derivative called $\Psi-$H. In this article we investigated four of the five 
conditions relative to the criterion to be a fractional derivative and other innumerable properties, lemmas, theorems, examples, as well as rules of fractional derivatives. 
However, the Leibniz rule was still lacking.

Thus, with this criterion to be determined, we have as main objective of this article, to investigate the Leibniz type rule, given by
\begin{eqnarray*}
^{\mathbf{H}}\mathfrak{D}_{a+}^{\alpha ,\beta ;\Psi }\left( fg\right) \left( x\right) &=&\overset{\infty }{\underset{m=0}{\sum }}\left( 
\begin{array}{c}
\alpha  \\ 
m
\end{array}%
\right) f^{\left( m\right) }\left( x\right) \text{ }^{\mathbf{H}}\mathfrak{D}_{a+}^{\alpha -m,\beta ;\Psi }g\left( x\right) + \Omega_{f,g}(\alpha,\beta,a),
\end{eqnarray*}
with 
$$
\Omega_{f,g}(\alpha,\beta,a)=\underset{k=0}{\overset{\infty }{\sum }}\left( \begin{array}{c} -\xi  \\ k\end{array} \right) 
\mathcal{I}_{a+}^{\xi +k;\Psi }g\left( a\right) \left( f^{\left( k\right) }\left( x\right) -f^{\left( k\right) }\left( a\right) 
\right) \dfrac{\left( \Psi \left( x\right) -\Psi \left( a\right) \right) ^{\xi-\alpha}}{\Gamma \left( \beta -\alpha \beta \right) },
$$ 
being $\xi = (1-\beta)(1-\alpha)$, $^{\mathbf{H}}\mathfrak{D}_{a+}^{\alpha ,\beta ;\Psi }\left( \cdot\right)$ is the $\Psi-$H fractional derivative. 
In addition, we show that from this Leibniz rule, we obtain their respective Leibniz rule and Leibniz type rule, since the $\Psi-$H 
fractional derivatives contemplates a vast class of fractional derivatives.

The article has the following structure: In section 2, we present the definitions of the Riemann-Liouville fractional integral with respect to another 
function and the $\Psi-$H fractional derivative. We analyze the lemmas \ref{lemma1} e \ref{lemma2}, which are fundamental in the investigation of the Leibniz type rule.
In section 3, we investigate the main result of the article, the Leibniz rule. Two versions for the Leibniz-type rule are presented, one version in relation to the 
$\Psi-$RL fractional derivative and the other in relation to the $\Psi-$H fractional derivative. In this sense, we present some Leibniz rules and Leibniz type rules 
from the choice of the $\Psi(\cdot)$ function and the limits $\beta \rightarrow 1$ or $\beta \rightarrow 0$. Concluding remarks close the paper.

%%%%%%%%%%%%%%%%%%%%%%%%%%%%%%%%%%%%%%%%%%%%%%%%%%%%%%%%%%%%%%%%%%%%%%%%%%%%%%%%%%%%%%%%%%%%%%%%%%%%%%%%%%%%%%%%%%%%%%%%%%%%%%%%%%%%%%%%%%%%%%%%%%%%%%%%%%%%%%%%%%%%%%%%%%%%%%%%%%%%%%%%%%%%%%%%%%%%%%%%%%%%%%%%%%%%%%%%%%%%%%%%
\section{Preliminary results}

In this section, we present the definitions of the Riemann-Liouville fractional integral with respect to another function and the $\Psi-$H fractional derivative. 
We also investigate two Lemmas \ref{lemma1} and \ref{lemma2}, fundamental in obtaining the main result of the article.

\begin{definition}{\rm \cite{VanterlerOliveira2}} Let $I:=\left( a,b\right) $ with $-\infty \leq a<b\leq \infty $ an interval in a real line, $\alpha >0$ and $\Psi 
\left( \cdot \right)$ an increased and positive monotone function in $\left( a,b\right] $, whose derivative is continuous in $\left( a,b\right)$. The $\Psi-$RL fractional integral
to the left, of the order $\alpha$, of a function $f$, in relation to the function $\Psi$, is given by
\begin{equation}\label{eq1}
\mathcal{I}_{a+}^{\alpha ;\Psi }f\left( x\right) =\frac{1}{\Gamma \left( \alpha \right) }\int_{a}^{x}\mathfrak{N}(x,t)f\left( t\right) dt
\end{equation}
with $\mathfrak{N}(x,t):=\Psi ^{\prime }\left( t\right) \left( \Psi \left( x\right) -\Psi \left( t\right) \right) ^{\alpha -1}$ and the line $\Psi'(\cdot)$ denotes 
the derivative with respect to the variable $t$.
\end{definition}

In an analogous way, the $\Psi-$RL fractional integral on the right \cite{VanterlerOliveira2}.

Note that the integral Eq.(\ref{eq1}) can be written as follows, through equation
\begin{equation}
\mathcal{I}_{a+}^{\alpha ;\Psi }f\left( x\right) =Q_{\Psi }\mathcal{I}_{a+}^{\alpha ;\Psi
}Q_{\Psi }^{-1}f\left( x\right)
\end{equation}
with $Q_{\Psi }f\left( x\right) =f\left( \Psi \left( x\right) \right) $ and $Q_{\Psi }^{-1}$ denotes the inverse operator.

\begin{definition} {\rm\cite{VanterlerOliveira2}} Let $n-1<\alpha <n$ with $n\in \mathbb{N}$ and $\Lambda =\left[ a,b\right]$ an interval $-\infty 
\leq a<b\leq \infty $, $\Psi $ an increasing function such that $\Psi ^{\prime }\left( x\right) \neq 0$, $\forall t\in \Lambda $ and $\Psi \in C^{n}\left(\Lambda, 
\mathbb{R}\right) $ and $f\in C^{n}\left( \Lambda ,\mathbb{R}\right) $. The $\Psi-$H fractional derivative of order $\alpha $ and type $\beta $ with $0\leq \beta \leq 1$, 
to the left of a function $f$, is given by
\begin{equation}\label{eq3}
^{\mathbf{H}}\mathfrak{D}_{a+}^{\alpha ,\beta ;\Psi }f\left( x\right) =\mathcal{I}_{a+}^{\beta \left(n-\alpha \right) ;\Psi }\left( \frac{1}{\Psi ^{\prime }\left( x\right) }
\frac{d}{dx}\right) ^{n}\mathcal{I}_{a+}^{\left( 1-\beta \right) \left( n-\alpha\right) ;\Psi }f\left( x\right).
\end{equation}
\end{definition}
In an analogous way, the $\Psi-$H fractional derivative on the right \cite{VanterlerOliveira2}.

We can write the $\Psi-$H fractional derivative in terms of the $\Psi-$Riemann-Liouville ($\Psi-$RL) fractional derivative and $\Psi-$Caputo ($\Psi-$C) fractional derivative, as follow
\begin{equation}\label{eq4}
^{\mathbf{H}}\mathfrak{D}^{\alpha ,\beta ;\Psi }f\left( x\right) =\mathcal{I}_{a+}^{\beta \left(
n-\alpha \right) ;\Psi }\text{ }^{\mathbf{Rl}}\mathfrak{D}^{\alpha +\beta \left( n-\alpha
\right) ;\Psi }f\left( x\right)
\end{equation}
a
\begin{equation}\label{eq5}
^{\mathbf{H}}\mathfrak{D}^{\alpha ,\beta ;\Psi }f\left( x\right) =\text{ }^{\mathbf{C}}\mathfrak{D}^{\beta
\left( \alpha -n\right) +n;\Psi }\mathcal{I}_{a+}^{\left( 1-\beta \right) \left(
n-\alpha \right) ;\Psi }f\left( x\right),
\end{equation}
respectively.

The $\Psi-$H fractional derivatives generalizes a wide class of fractional derivatives of these, we mention: $\Psi-$RL, $\Psi-$C, 
Weyl, Chen, Jumarie, Hilfer, Hilfer-Katugampola. For other specific cases, we suggest \cite{VanterlerOliveira2}.

As we have already said, the Lemma \ref{lemma1} is important in obtaining the Leibniz type rule as we shall see below.

\begin{lemma}\label{lemma1} Let $\left( a,b\right] $ with $-\infty \leq a<b\leq \infty $ an interval in real line, $\alpha >0$ and $\Psi \left( x\right)$ 
a monotone growing and positive function in $\Lambda$, whose derivative is continuous in $\left( a,b\right)$. Thus,
\begin{equation}\label{eq6}
\mathcal{I}_{a+}^{\alpha ;\Psi }f\left( x\right) =\underset{n=0}{\overset{\infty }{%
\sum }}\left( 
\begin{array}{c}
-\alpha \\ 
n%
\end{array}%
\right) f^{\left( n\right) }\left( x\right) \frac{\left( \Psi \left( x\right) -\Psi \left( a\right) \right) ^{\alpha +n}}{\Gamma \left( \alpha +n+1\right) }
\end{equation}
where $f^{\left( n\right) }$ is the $n$-th derivative of the entire order and $x>a$.
\end{lemma}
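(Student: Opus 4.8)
The plan is to reduce the $\Psi$-weighted integral to an ordinary Riemann--Liouville integral by a change of variables, expand the integrand in a Taylor series, and integrate term by term. First I would substitute $u=\Psi(t)$ in the definition \eqref{eq1}. Since $\Psi$ is increasing with continuous, nonvanishing derivative, this is a legitimate change of variables with $du=\Psi'(t)\,dt$, and it turns the kernel $\mathfrak{N}(x,t)$ into $(\Psi(x)-u)^{\alpha-1}$. Setting $g:=f\circ\Psi^{-1}$, the integral becomes
\begin{equation*}
\mathcal{I}_{a+}^{\alpha;\Psi}f(x)=\frac{1}{\Gamma(\alpha)}\int_{\Psi(a)}^{\Psi(x)}(\Psi(x)-u)^{\alpha-1}g(u)\,du,
\end{equation*}
which is precisely the classical left fractional integral of $g$ evaluated at the point $\Psi(x)$; this is exactly the operator conjugation by $Q_{\Psi}f=f\circ\Psi$ recorded after \eqref{eq1}, and it confines all the $\Psi$-dependence to the endpoints and to the argument.

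Next I would insert the Taylor expansion of $g$ about the upper endpoint, $g(u)=\sum_{n=0}^{\infty}\frac{g^{(n)}(\Psi(x))}{n!}(u-\Psi(x))^{n}$. Assuming $f$, and hence $g$, is analytic on the relevant interval, the series converges uniformly there, so summation and integration may be interchanged. Each term reduces to a pure power integral,
\begin{equation*}
\int_{\Psi(a)}^{\Psi(x)}(\Psi(x)-u)^{\alpha-1}(u-\Psi(x))^{n}\,du=(-1)^{n}\,\frac{(\Psi(x)-\Psi(a))^{\alpha+n}}{\alpha+n},
\end{equation*}
with no special functions involved. It then remains to repackage the constants: using $\binom{-\alpha}{n}=\frac{(-1)^{n}\Gamma(\alpha+n)}{n!\,\Gamma(\alpha)}$ together with $\frac{1}{\alpha+n}=\frac{\Gamma(\alpha+n)}{\Gamma(\alpha+n+1)}$, the scalar $\frac{(-1)^{n}}{n!\,\Gamma(\alpha)\,(\alpha+n)}$ collapses exactly to $\binom{-\alpha}{n}\frac{1}{\Gamma(\alpha+n+1)}$, which yields the summand of \eqref{eq6}.

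Finally, the Taylor derivatives must be translated back to $f$. By the chain rule applied to $u=\Psi(x)$ one gets $g'(\Psi(x))=f'(x)/\Psi'(x)$, and inductively $g^{(n)}(\Psi(x))=\left(\frac{1}{\Psi'(x)}\frac{d}{dx}\right)^{n}f(x)$; this is the $n$-th derivative that should appear in \eqref{eq6}.

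The main obstacle I anticipate is analytic rather than algebraic: the term-by-term integration must be justified by controlling the Taylor remainder of $g$ uniformly on $[\Psi(a),\Psi(x)]$, so some analyticity of $f$ (or at least a suitable remainder estimate) is needed for \eqref{eq6} to hold as a genuinely convergent expansion. A secondary point of care is the identification of the higher derivatives: the operator produced naturally by the substitution is $\left(\frac{1}{\Psi'}\frac{d}{dx}\right)^{n}$, which coincides with the plain $n$-th derivative $f^{(n)}$ only when $\Psi(x)=x$, so the precise meaning of $f^{(n)}$ in \eqref{eq6} should be pinned down accordingly.
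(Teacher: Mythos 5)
Your proposal is correct and follows essentially the same route as the paper: the paper likewise expands $f(t)=\sum_{n\geq 0}\frac{f^{(n)}(x)}{n!}\left(\Psi(t)-\Psi(x)\right)^{n}$ (your Taylor series for $g=f\circ\Psi^{-1}$ in disguise), integrates termwise to obtain $\left(\Psi(x)-\Psi(a)\right)^{\alpha+n}/(\alpha+n)$, and repackages the constants via the same identity $\binom{-\alpha}{n}=\frac{(-1)^{n}\Gamma(\alpha+n)}{n!\,\Gamma(\alpha)}$. Your two closing caveats --- that convergence/analyticity must be assumed (the paper only says ``suppose we can write'' $f$ as such a series) and that the coefficients are really the $\Psi$-derivatives $\bigl(\frac{1}{\Psi'(x)}\frac{d}{dx}\bigr)^{n}f(x)$ rather than plain $f^{(n)}(x)$ unless $\Psi(x)=x$ --- are both legitimate points that the paper's statement and proof gloss over.
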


\begin{proof}
In fact, suppose we can write the function $f$ as follows
\begin{equation} \label{eq7}
f\left( t\right) =\underset{n=0}{\overset{\infty }{\sum }}\frac{f^{\left( n\right) }\left( x\right) }{n!}\left( \Psi \left( t\right) -\Psi \left(x\right) \right) ^{n}.
\end{equation}

Substituting the function $f$, Eq.(\ref{eq7}), in the $\Psi-$RL fractional integral, we get
\begin{eqnarray}\label{eq8}
\mathcal{I}_{a+}^{\alpha ;\Psi }f\left( x\right) &=&\frac{1}{\Gamma \left( \alpha \right) }\int_{a}^{x}\Psi ^{\prime }\left( t\right) 
\left( \Psi \left( x\right) -\Psi \left( t\right) \right) ^{\alpha -1}\underset{n=0}{\overset{ \infty }{\sum }}\frac{f^{\left( n\right) }
\left( x\right) }{n!}\left( \Psi\left( t\right) -\Psi \left( x\right) \right) ^{n}dt  \notag \\
&=&\frac{1}{\Gamma \left( \alpha \right) }\overset{\infty }{\underset{n=0}{\sum }}\frac{f^{\left( n\right) }\left( x\right) 
\left( -1\right) ^{n}}{n!}\int_{a}^{x}\Psi ^{\prime }\left( t\right) \left( \Psi \left( x\right) -\Psi \left( t\right) \right) ^{\alpha -1+n}dt  \notag \\
&=&\frac{1}{\Gamma \left( \alpha \right) }\overset{\infty }{\underset{n=0}{\sum }}\frac{f^{\left( n\right) }\left( x\right) 
\left( -1\right) ^{n}}{n!}\frac{\left( \Psi \left( x\right) -\Psi \left( a\right) \right) ^{\alpha +n}}{\alpha +n}  \notag \\
&=&\frac{1}{\Gamma \left( \alpha \right) }\overset{\infty }{\underset{n=0}{\sum }}\frac{f^{\left( n\right) }\left( x\right) 
\left( -1\right) ^{n}}{n!}\frac{\Gamma \left( \alpha +n\right) \left( \Psi \left( x\right) -\Psi \left( a\right) \right) ^{\alpha +n}}{\Gamma \left( \alpha +n+1\right) }.
\end{eqnarray}

Note that from the definition of binomial coefficient and the reflection formula for the gamma function, we have
\begin{equation*}
\left( 
\begin{array}{c}
-\alpha \\ 
n
\end{array}
\right) =\frac{\Gamma \left( 1-\alpha \right) }{n!\Gamma \left( 1-\alpha -n\right) }=\frac{\pi }{\sin \left( \pi \alpha \right) \Gamma \left( \alpha \right) 
\left( -\alpha -n\right) \Gamma \left( -\alpha -n\right) n!}.
\end{equation*}

On the other hand, we can write
\begin{equation*}
\Gamma \left( -\alpha -n\right) \Gamma \left( 1+\alpha +n\right) =\frac{\pi }{\sin \left( -\pi \left( \alpha +n\right) \right) }=\frac{-\pi }{\sin 
\left( \pi \alpha \right) \left( -1\right) ^{n}}.
\end{equation*}

Then, we obtain
\begin{equation*}
\left( 
\begin{array}{c}
-\alpha \\ 
n
\end{array}
\right) =\frac{-\pi \Gamma \left( 1+\alpha +n\right) \left( -1\right) ^{n}\sin \left( \alpha \pi \right) }{\sin \left( \pi \alpha \right) 
\Gamma \left( \alpha \right) \left( -\alpha -n\right) n!\pi }=\frac{\left(
-1\right) ^{n}\Gamma \left( \alpha +n\right) }{\Gamma \left( \alpha \right)n!}.
\end{equation*}

Thus, by means of Eq.(\ref{eq8}), we conclude
\begin{equation*}
\mathcal{I}_{a+}^{\alpha ;\Psi }f\left( x\right) =\underset{n=0}{\overset{\infty }{%
\sum }}\left( 
\begin{array}{c}
-\alpha \\ 
n%
\end{array}%
\right) f^{\left( n\right) }\left( x\right) \frac{\left( \Psi \left(
x\right) -\Psi \left( a\right) \right) ^{\alpha +n}}{\Gamma \left( \alpha
+n+1\right) },
\end{equation*}
which is the desired result. \end{proof}

The presentation of Lemma \ref{lemma2} below is also of paramount importance for the investigation of the Leibniz type rule as will be seen in the next section.

\begin{lemma}\label{lemma2} Let $\left( a,b\right] $ with $-\infty \leq a<b\leq \infty $ an interval in real line, $\alpha >0$ and $\Psi \left( x\right) $ 
a monotone growing and positive function in $\Lambda$, whose derivative is continuous in $\left( a,b\right)$. The left fractional integral of a two-function product is given by
\begin{equation*}
\mathcal{I}_{a+}^{\alpha ;\Psi }\left( fg\right) \left( x\right) =\underset{k=0}{\overset{\infty }{\sum }}\left( 
\begin{array}{c}
-\alpha \\ 
k
\end{array}
\right) f^{\left( k\right) }\left( x\right) \mathcal{I}_{a+}^{\alpha +k;\Psi }g\left(x\right) .
\end{equation*}
where $f^{\left( k\right) }$ is the $k$-th derivative of the entire order and $x>a$.
\end{lemma}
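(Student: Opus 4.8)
The plan is to mirror the strategy of Lemma \ref{lemma1}: expand the factor $f$ in the generalized Taylor series in powers of $\Psi(t)-\Psi(x)$, insert it into the integral defining the $\Psi-$RL fractional integral of the product $fg$, and then recognize each resulting term as a higher-order fractional integral of $g$.

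First I would start from the definition \eqref{eq1}, writing
\[
\mathcal{I}_{a+}^{\alpha ;\Psi }\left( fg\right) \left( x\right) =\frac{1}{\Gamma \left( \alpha \right) }\int_{a}^{x}\Psi ^{\prime }\left( t\right) \left( \Psi \left( x\right) -\Psi \left( t\right) \right) ^{\alpha -1}f\left( t\right) g\left( t\right) dt,
\]
and substitute the expansion $f\left( t\right) =\sum_{k=0}^{\infty }\frac{f^{\left( k\right) }\left( x\right) }{k!}\left( \Psi \left( t\right) -\Psi \left( x\right) \right) ^{k}$ already used in \eqref{eq7}. Interchanging the (assumed convergent) series with the integral and using the identity $\left( \Psi \left( t\right) -\Psi \left( x\right) \right) ^{k}=\left( -1\right) ^{k}\left( \Psi \left( x\right) -\Psi \left( t\right) \right) ^{k}$, the product of the kernel with the monomial collapses into the single power $\left( \Psi \left( x\right) -\Psi \left( t\right) \right) ^{\alpha +k-1}$, yielding
\[
\mathcal{I}_{a+}^{\alpha ;\Psi }\left( fg\right) \left( x\right) =\sum_{k=0}^{\infty }\frac{\left( -1\right) ^{k}f^{\left( k\right) }\left( x\right) }{k!}\frac{1}{\Gamma \left( \alpha \right) }\int_{a}^{x}\Psi ^{\prime }\left( t\right) \left( \Psi \left( x\right) -\Psi \left( t\right) \right) ^{\alpha +k-1}g\left( t\right) dt.
\]

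Next I would identify the surviving integral. Comparing with \eqref{eq1} applied at order $\alpha +k$, the integral equals $\Gamma \left( \alpha +k\right) \,\mathcal{I}_{a+}^{\alpha +k;\Psi }g\left( x\right) $, so each summand carries the scalar factor $\frac{\left( -1\right) ^{k}\Gamma \left( \alpha +k\right) }{\Gamma \left( \alpha \right) k!}$. Finally I would invoke the binomial identity already established in the proof of Lemma \ref{lemma1}, namely $\binom{-\alpha }{k}=\frac{\left( -1\right) ^{k}\Gamma \left( \alpha +k\right) }{\Gamma \left( \alpha \right) k!}$, to rewrite the coefficient and obtain precisely the claimed series $\sum_{k=0}^{\infty }\binom{-\alpha }{k}f^{\left( k\right) }\left( x\right) \mathcal{I}_{a+}^{\alpha +k;\Psi }g\left( x\right) $, which is the desired result.

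The main obstacle I anticipate is analytic rather than algebraic: justifying the term-by-term integration, i.e. the interchange of $\sum_{k}$ with $\int_{a}^{x}$. This demands control on the convergence of the Taylor-type expansion of $f$ together with the integrability of the kernel against $g$ on $\left[ a,x\right] $; exactly as in Lemma \ref{lemma1}, I would operate under the standing assumption that $f$ is represented by its series and that the resulting series of fractional integrals converges, so that uniform convergence on $\left[ a,x\right] $ (or dominated convergence) legitimizes the exchange. Everything else reduces to the same gamma-function bookkeeping carried out in Lemma \ref{lemma1}.
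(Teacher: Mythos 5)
Your proof is correct at the same level of formal rigor as the paper's, but it takes a genuinely different route. The paper never returns to the integral definition \eqref{eq1}: it applies Lemma \ref{lemma1} to the product $fg$ itself (Eq.~\eqref{eq9}), expands $(fg)^{(n)}$ by the classical integer-order Leibniz rule, swaps the order of the resulting double sum, reindexes $n\rightarrow n+k$, invokes the binomial identity $\binom{-\alpha}{n+k}\binom{n+k}{k}=\binom{-\alpha}{k}\binom{-\alpha-k}{n}$ of Eq.~\eqref{eq12}, and finally recognizes the inner sum as $\mathcal{I}_{a+}^{\alpha+k;\Psi}g(x)$ by reading Eq.~\eqref{eq6} backwards. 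You instead expand only $f$ about $x$ inside the integral and identify $\Gamma(\alpha+k)\,\mathcal{I}_{a+}^{\alpha+k;\Psi}g(x)$ directly, so the double-sum machinery and the identity \eqref{eq12} disappear entirely; the only bookkeeping left is the coefficient $\binom{-\alpha}{k}=\frac{(-1)^{k}\Gamma(\alpha+k)}{\Gamma(\alpha)\,k!}$ already computed in the proof of Lemma \ref{lemma1}. Your route is shorter and, in one respect, more general: it requires the Taylor representability only of $f$, whereas the paper's argument needs the expansion of the product $fg$ and all derivatives $(fg)^{(n)}$, hence smoothness hypotheses on $g$ as well; in your version $g$ need only be integrable against the kernel. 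The analytic price is the same one the paper silently pays --- the interchange of $\sum_{k}$ with $\int_{a}^{x}$ and the validity of the expansion of $f(t)$ about $x$ on all of $(a,x)$ are assumed rather than proved --- and you at least flag this explicitly, which the paper does not.
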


\begin{proof}
Let $f$ and $g$ be two functions satisfying the conditions of Lemma \ref{lemma1}. Then, Eq.(\ref{eq6}) can be written in the following form
\begin{equation}\label{eq9}
\mathcal{I}_{a+}^{\alpha ;\Psi }\left( fg\right) \left( x\right) =\overset{\infty }{\underset{n=0}{\sum }}\left( 
\begin{array}{c}
-\alpha \\ 
n%
\end{array}%
\right) \left( fg\right) ^{\left( n\right) }\left( x\right) \frac{\left(\Psi \left( x\right) -\Psi \left( a\right) \right) ^{\alpha +n}}{\Gamma\left( \alpha +n+1\right) }.
\end{equation}

Using the Leibniz rule for the whole case in Eq.(\ref{eq9}), we have
\begin{eqnarray}\label{eq10}
\mathcal{I}_{a+}^{\alpha ;\Psi }\left( fg\right) \left( x\right) &=&\underset{n=0}{\overset{\infty }{\sum }}\left( 
\begin{array}{c}
-\alpha \\ 
n%
\end{array}%
\right) \overset{n}{\underset{k=0}{\sum }}\left( 
\begin{array}{c}
n \\ 
k%
\end{array}%
\right) f^{\left( k\right) }\left( x\right) g^{\left( n-k\right) }\left(x\right) \frac{\left( \Psi \left( x\right) -\Psi \left( a\right) \right)^{\alpha +n}}{\Gamma 
\left( \alpha +n+1\right) }  \notag \\
&=&\underset{n=0}{\overset{\infty }{\sum }}\overset{\infty }{\underset{n=k}{\sum }}\left( 
\begin{array}{c}
-\alpha \\ 
n%
\end{array}%
\right) \left( 
\begin{array}{c}
n \\ 
k%
\end{array}%
\right) f^{\left( k\right) }\left( x\right) g^{\left( n-k\right) }\left( x\right) \frac{\left( \Psi \left( x\right) -\Psi \left( a\right) \right) ^{\alpha +n}}{\Gamma 
\left( \alpha +n+1\right) }  \notag \\
&=&\underset{k=0}{\overset{\infty }{\sum }}f^{\left( k\right) }\left( x\right) \overset{\infty }{\underset{n=k}{\sum }}\left( 
\begin{array}{c}
-\alpha \\ 
n%
\end{array}%
\right) \left( 
\begin{array}{c}
n \\ 
k%
\end{array}%
\right) g^{\left( n-k\right) }\left( x\right) \frac{\left( \Psi \left( x\right) -\Psi \left( a\right) \right) ^{\alpha +n}}{\Gamma \left( \alpha +n+1\right) }\notag \\
\end{eqnarray}

Entering the index change $n\rightarrow n + k$ in the second summation on the right side of Eq.(\ref{eq10}), we can write
\begin{equation}\label{eq11}
\mathcal{I}_{a+}^{\alpha ;\Psi }\left( fg\right) \left( x\right) =\underset{k=0}{\overset{\infty }{\sum }}f^{\left( k\right) }\left( x\right) \overset{\infty }{\underset{n=0}{\sum }}\left( 
\begin{array}{c}
-\alpha \\ 
n+k%
\end{array}%
\right) \left( 
\begin{array}{c}
n+k \\ 
k%
\end{array}
\right) g^{\left( n\right) }\left( x\right) \frac{\left( \Psi \left( x\right) -\Psi \left( a\right) \right) ^{\alpha +n+k}}{\Gamma \left( \alpha +n+k+1\right) }.
\end{equation}

Note that
\begin{eqnarray}\label{eq12}
\left( 
\begin{array}{c}
-\alpha \\ 
n+k%
\end{array}%
\right) \left( 
\begin{array}{c}
n+k \\ 
k%
\end{array}%
\right) &=&\frac{\Gamma \left( 1-\alpha \right) }{\Gamma \left( 1-\alpha -n-k\right) \left( n+k\right) !}\frac{\left( n+k\right) !}{n!k!}  \notag \\
&=&\frac{\Gamma \left( 1-\alpha \right) }{\Gamma \left( 1-\alpha -n-k\right) n!k!}\frac{\Gamma \left( 1-\alpha -k\right) }{\Gamma \left( 1-\alpha -k\right) }  \notag \\
&=&\left( 
\begin{array}{c}
-\alpha \\ 
k%
\end{array}%
\right) \left( 
\begin{array}{c}
-\alpha -k \\ 
n%
\end{array}%
\right) .
\end{eqnarray}

Substituting Eq.(\ref{eq12}) into Eq.(\ref{eq11}), we get
\begin{equation*}
\mathcal{I}_{a+}^{\alpha ;\Psi }\left( fg\right) \left( x\right) =\underset{k=0}{\overset{\infty }{\sum }}f^{\left( k\right) }\left( x\right) \overset{\infty }{\underset{n=0}{\sum }}\left( 
\begin{array}{c}
-\alpha \\ 
k%
\end{array}%
\right) \left( 
\begin{array}{c}
-\alpha -k \\ 
n%
\end{array}%
\right) g^{\left( n\right) }\left( x\right) \frac{\left( \Psi \left(x\right) -\Psi \left( a\right) \right) ^{\alpha +n+k}}{\Gamma \left( \alpha +n+k+1\right) }.
\end{equation*}

By means of Eq.(\ref{eq6}), we obtain an expression for the $\Psi-$RL fractional integral of the product of two functions $f$ and $g$, given by
\begin{equation}\label{fu}
\mathcal{I}_{a+}^{\alpha ;\Psi }\left( fg\right) \left( x\right) =\underset{k=0}{\overset{\infty }{\sum }}\left( 
\begin{array}{c}
-\alpha \\ 
k
\end{array}
\right) f^{\left( k\right) }\left( x\right) \mathcal{I}_{a+}^{\alpha +k;\Psi }g\left(x\right),
\end{equation}
which is the desired result
\end{proof}

%%%%%%%%%%%%%%%%%%%%%%%%%%%%%%%%%%%%%%%%%%%%%%%%%%%%%%%%%%%%%%%%%%%%%%%%%%%%%%%%%%%%%%%%%%%%%%%%%%%%%%%%%%%%%%%%%%%%%%%%%%%%%%%%%%%%%%%%%%%%%%%%%%%%%%%%%%%%%%%%%%%
%%%%%%%%%%%%%%%%%%%%%%%%%%%%%%%%%%%%%%%%%%%%%%%%%%%%%%%%%%%%
\section{Leibniz type rule}

Recently Sousa and Oliveira introduced the $\Psi-$H fractional derivative as presented in Eq.(\ref{eq3}) that contemplates a wide class of fractional derivatives.
Moreover, in this work we can find results that guarantee that the operator is well defined, as well as other relevant results that contribute in the field
of the fractional calculation. However, the paper did not address the Leibniz rule. In this sense, we will obtain a formula for the $\Psi-$H fractional 
derivative of the product of two functions, the so-called Leibniz rule of the function $\Psi$, of type I and type II, that is, one as a function of the 
$\Psi-$RL fractional derivative and the other as a function of the $\Psi-$H fractional derivative.

By means of the expression of the $\Psi-$RL fractional derivative of the product of two functions, as obtained in the previous section, we will present two 
expressions for the Leibniz rule with respect to another function, specifically, we present some particular cases, this is, possible formulations of Leibniz or Leibniz type rules.

For the investigation of the Leibniz rule of the $\Psi-$C fractional derivative given by Eq.(\ref{eq5}) it is sufficient to consider $n = 1$.

\begin{theorem}{\rm (Leibniz Type Rule I).} Let $0<\alpha <1$ and $\Lambda =\left[ a,b\right] $ an interval such that $-\infty \leq a<b\leq \infty $, $\Psi $ a growing function 
such that $\Psi ^{\prime }\left( x\right) \neq 0$, $t\in \Lambda $ and $\Psi \in C\left( \Lambda ,\mathbb{R}\right)$, $f,g\in C\left( \Lambda ,\mathbb{R}\right)$. Then, we have
\begin{eqnarray}\label{eq13}
^{\mathbf{H}}\mathfrak{D}_{a+}^{\alpha ,\beta ;\Psi }\left( fg\right) \left( x\right) &=&\underset{l=0}{\overset{\infty }{\sum }}\overset{\infty }{\underset{m=0}{\sum }}\left( 
\begin{array}{c}
-\xi \\ 
m-l
\end{array}%
\right) \left( 
\begin{array}{c}
\alpha-\xi \\ 
l%
\end{array}%
\right) f^{\left( n\right) }\left( x\right) \text{ }^{\mathbf{RL}}\mathfrak{D}_{a+}^{\alpha -m;\Psi }g\left( x\right)  \notag \\ &&-\underset{k=0}{\overset{\infty }{\sum }}\left( 
\begin{array}{c}
-\xi \\ 
k%
\end{array}%
\right) \mathcal{I}_{a+}^{\xi +k;\Psi}g\left( a\right) f^{\left( k\right) }\left( a\right) \frac{\left( \Psi \left( x\right) -\Psi \left( a\right) \right) ^{\xi-\alpha }}{\Gamma 
\left( \beta \left( 1-\alpha \right) \right) }.\notag \\
\end{eqnarray}
\end{theorem}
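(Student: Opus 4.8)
The plan is to reduce everything to the case $n=1$ and then apply Lemma \ref{lemma2} twice: once ``inside'' the Riemann--Liouville derivative and once after the outer fractional integration. Since $0<\alpha<1$, the factorization \eqref{eq4} with $n=1$ gives
$$
^{\mathbf{H}}\mathfrak{D}_{a+}^{\alpha,\beta;\Psi}(fg)(x)=\mathcal{I}_{a+}^{\beta(1-\alpha);\Psi}\,{}^{\mathbf{RL}}\mathfrak{D}_{a+}^{\gamma;\Psi}(fg)(x),\qquad \gamma=\alpha+\beta(1-\alpha)=1-\xi ,
$$
where $\xi=(1-\beta)(1-\alpha)$; I abbreviate $\sigma=\beta(1-\alpha)$, so that $\sigma+\xi=1-\alpha$, and write $D_\Psi=\frac{1}{\Psi'(x)}\frac{d}{dx}$. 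Recalling ${}^{\mathbf{RL}}\mathfrak{D}_{a+}^{\gamma;\Psi}=D_\Psi\,\mathcal{I}_{a+}^{\xi;\Psi}$, the object to be computed is $\mathcal{I}_{a+}^{\sigma;\Psi}D_\Psi\,\mathcal{I}_{a+}^{\xi;\Psi}(fg)$.

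First I would produce a $\Psi$--RL product rule. Formally this is Lemma \ref{lemma2} continued to negative order $-\gamma$ (interpreting $\mathcal{I}^{-\gamma}={}^{\mathbf{RL}}\mathfrak{D}^{\gamma}$), giving $ {}^{\mathbf{RL}}\mathfrak{D}_{a+}^{\gamma;\Psi}(fg)=\sum_{l\ge 0}\binom{\gamma}{l}f^{(l)}\,{}^{\mathbf{RL}}\mathfrak{D}_{a+}^{\gamma-l;\Psi}g$. The honest way to earn this is to expand $\mathcal{I}_{a+}^{\xi;\Psi}(fg)=\sum_k\binom{-\xi}{k}f^{(k)}\mathcal{I}_{a+}^{\xi+k;\Psi}g$ by Lemma \ref{lemma2} at order $\xi$ and then apply $D_\Psi$ term by term, using $D_\Psi\mathcal{I}_{a+}^{\xi+k;\Psi}g=\mathcal{I}_{a+}^{\xi+k-1;\Psi}g$ for $k\ge 1$. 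The only term that is not clean is the lowest one, $D_\Psi\mathcal{I}_{a+}^{\xi;\Psi}g$: a single integration by parts in the $\Psi$--integral gives $D_\Psi\mathcal{I}_{a+}^{\xi;\Psi}g=\mathcal{I}_{a+}^{\xi;\Psi}(D_\Psi g)+\frac{(\Psi(x)-\Psi(a))^{\xi-1}}{\Gamma(\xi)}g(a)$, and it is precisely this boundary contribution that, propagated through the outer integral, becomes the correction term of \eqref{eq13}.

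Next I would apply $\mathcal{I}_{a+}^{\sigma;\Psi}$ and use Lemma \ref{lemma2} a second time, at order $\sigma$, on each product $f^{(l)}\,{}^{\mathbf{RL}}\mathfrak{D}_{a+}^{\gamma-l;\Psi}g$; the semigroup law then combines $\mathcal{I}_{a+}^{\sigma+i;\Psi}$ with ${}^{\mathbf{RL}}\mathfrak{D}_{a+}^{\gamma-l;\Psi}$, and because $\sigma+\xi=1-\alpha$ the net order collapses to $\mathcal{I}_{a+}^{i+l-\alpha;\Psi}g={}^{\mathbf{RL}}\mathfrak{D}_{a+}^{\alpha-(i+l);\Psi}g$. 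Setting $m=i+l$ and keeping $l$ as the inner summation index, the two binomial weights coming from the two applications of Lemma \ref{lemma2} survive as a \emph{product} of two binomial coefficients; after the reindexing $n\mapsto n+k$ and a Pascal/Vandermonde rearrangement of the type already carried out in \eqref{eq12}, this product takes the two--factor form displayed in the first (double) sum of \eqref{eq13}. Crucially, one does \emph{not} collapse this double sum by a single Vandermonde identity: leaving it uncollapsed, over $\mathbf{RL}$--derivatives ${}^{\mathbf{RL}}\mathfrak{D}_{a+}^{\alpha-m;\Psi}g$, is exactly what distinguishes this ``Type I'' representation from the collapsed $\binom{\alpha}{m}$-version. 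In parallel, evaluating $\mathcal{I}_{a+}^{\sigma;\Psi}$ on the boundary power $(\Psi(x)-\Psi(a))^{\xi-1}$ by the power rule for the $\Psi$--integral produces the factor $\tfrac{(\Psi(x)-\Psi(a))^{\,\cdot}}{\Gamma(\beta(1-\alpha))}$ together with the structure $f^{(k)}(a)\,\mathcal{I}_{a+}^{\xi+k;\Psi}g(a)$, i.e.\ the second sum of \eqref{eq13}.

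The step I expect to be the main obstacle is justifying the formal moves across the order-$\le 1$ threshold and keeping the boundary bookkeeping consistent. Three points must be controlled: (i) $D_\Psi$ does not commute with $\mathcal{I}_{a+}^{\sigma;\Psi}$ once the order drops below $1$, so every place where an order crosses that threshold (in particular the $k=l=0$ term, of subunitary order $1-\alpha$) spawns a boundary term that has to be shown to reassemble into the single closed expression carried in \eqref{eq13}; (ii) differentiating a product with $D_\Psi$ also generates spurious $\tfrac{1}{\Psi'}f^{(\cdot+1)}$ pieces which must be reorganized so that only the $f^{(m)}$ appear, which is what forces the careful integration-by-parts treatment above rather than a naive product rule; and (iii) the whole computation interchanges two infinite summations with both $D_\Psi$ and $\mathcal{I}_{a+}^{\sigma;\Psi}$, so absolute/uniform convergence of the resulting double series—under the analyticity implicit in the expansion \eqref{eq7}—must be checked before the reindexing and the binomial rearrangement are legitimate. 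Once these are secured, the two sums identify termwise with the right-hand side of \eqref{eq13}.
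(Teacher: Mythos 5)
Your skeleton (factor the operator at $n=1$, expand the innermost $\mathcal{I}_{a+}^{\xi;\Psi}(fg)$ by Lemma~\ref{lemma2}, push the remaining operators through the series, reindex $m=k+l$, collapse by a semigroup law) is essentially the paper's, which writes the operator as ${}^{\mathbf{C}}\mathfrak{D}_{a+}^{\beta(\alpha-1)+1;\Psi}\mathcal{I}_{a+}^{\xi;\Psi}$ via Eq.~(\ref{eq14}) and then, at \emph{every} index $k$ of the Lemma~\ref{lemma2} expansion, invokes the ready-made $\Psi$-Caputo Leibniz rule Eq.~(\ref{eq18}). The genuine gap is your boundary bookkeeping. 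In the paper, the entire correction sum $\sum_k\binom{-\xi}{k}f^{(k)}(a)\,\mathcal{I}_{a+}^{\xi+k;\Psi}g(a)\,(\cdots)$ comes from the initial-value term of Eq.~(\ref{eq18}) applied separately to each product $f^{(k)}\,\mathcal{I}_{a+}^{\xi+k;\Psi}g$, i.e.\ from the Caputo-versus-RL discrepancy $\bigl(f^{(k)}\mathcal{I}_{a+}^{\xi+k;\Psi}g\bigr)(a)$ at every $k$. You instead locate the whole correction in the single $k=0$ term, via $D_\Psi\mathcal{I}_{a+}^{\xi;\Psi}g=\mathcal{I}_{a+}^{\xi;\Psi}(D_\Psi g)+g(a)\frac{(\Psi(x)-\Psi(a))^{\xi-1}}{\Gamma(\xi)}$ (here $D_\Psi=\frac{1}{\Psi'(x)}\frac{d}{dx}$), and then smear it with $\mathcal{I}_{a+}^{\sigma;\Psi}$. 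That cannot reproduce the second sum of Eq.~(\ref{eq13}): it yields terms of the shape $f^{(k)}(x)\,g(a)$ with $x$-dependent derivatives of $f$ and varying powers, whereas the point values $f^{(k)}(a)$ and, crucially, the family $\mathcal{I}_{a+}^{\xi+k;\Psi}g(a)$ for $k\geq 1$ never appear anywhere in your computation; even the Gamma factor comes out wrong, since the power rule gives $\mathcal{I}_{a+}^{\sigma;\Psi}(\Psi(x)-\Psi(a))^{\xi-1}=\frac{\Gamma(\xi)}{\Gamma(\xi+\sigma)}(\Psi(x)-\Psi(a))^{\xi+\sigma-1}$ with $\xi+\sigma=1-\alpha$, hence $\Gamma(1-\alpha)$ rather than the $\Gamma(\beta(1-\alpha))$ of Eq.~(\ref{eq13}). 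Moreover the $k=0$ conversion is unnecessary for your first step: keeping $D_\Psi\mathcal{I}_{a+}^{\xi;\Psi}g={}^{\mathbf{RL}}\mathfrak{D}_{a+}^{1-\xi;\Psi}g$ as it stands, Pascal's identity $\binom{-\xi}{l-1}+\binom{-\xi}{l}=\binom{1-\xi}{l}$ gives the exact Osler rule ${}^{\mathbf{RL}}\mathfrak{D}_{a+}^{\gamma;\Psi}(fg)=\sum_l\binom{\gamma}{l}f^{(l)}\,{}^{\mathbf{RL}}\mathfrak{D}_{a+}^{\gamma-l;\Psi}g$ with no boundary term at that stage.

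Where the correction actually hides on your route is precisely the step you declare clean: the collapse $\mathcal{I}_{a+}^{\sigma+i;\Psi}\,{}^{\mathbf{RL}}\mathfrak{D}_{a+}^{\gamma-l;\Psi}g={}^{\mathbf{RL}}\mathfrak{D}_{a+}^{\alpha-(i+l);\Psi}g$. The semigroup law in the order integral-after-derivative is false when $\gamma-l>0$ (already $\mathcal{I}_{a+}^{1;\Psi}D_\Psi g=g-g(a)$); in general, for $0<\nu<1$, $\mathcal{I}_{a+}^{\mu;\Psi}\,{}^{\mathbf{RL}}\mathfrak{D}_{a+}^{\nu;\Psi}g=\mathcal{I}_{a+}^{\mu-\nu;\Psi}g-\frac{(\Psi(x)-\Psi(a))^{\mu-1}}{\Gamma(\mu)}\mathcal{I}_{a+}^{1-\nu;\Psi}g(a)$, and with $\nu=\gamma-l$ the dropped term carries exactly $\mathcal{I}_{a+}^{\xi+l;\Psi}g(a)$ --- the structure of the second sum of Eq.~(\ref{eq13}). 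The paper never meets this problem because it only composes in the harmless opposite order ${}^{\mathbf{RL}}\mathfrak{D}\circ\mathcal{I}$ (Eq.~(\ref{eq21})), the initial-value terms having already been extracted by Eq.~(\ref{eq18}). A secondary inaccuracy: your weights $\binom{-\sigma}{m-l}\binom{\gamma}{l}$ are not convertible termwise into the displayed $\binom{-\xi}{m-l}\binom{\alpha-\xi}{l}$ by an identity of the type of Eq.~(\ref{eq12}); since the summand depends on $l$ only through the binomials, both double sums do agree after the Vandermonde collapse of the $l$-sum to $\binom{\alpha}{m}$, so this part is reparable, but as stated the claimed rearrangement does not exist. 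To complete your route you would have to redo the composition step with the corrected formula above and then show the resulting initial-value family reassembles into the paper's correction sum --- which is the content you would otherwise get for free from Eq.~(\ref{eq18}).
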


\begin{proof}
For the proof of Eq.(\ref{eq13}), let us use the relation between the $\Psi-$H fractional derivative and $\Psi-$C fractional derivative, given by
\begin{equation}\label{eq14}
^{\mathbf{H}}\mathfrak{D}_{a+}^{\alpha ,\beta ;\Psi }f\left( x\right) =\text{ }^{\mathbf{C}}\mathfrak{D}_{a+}^{\beta\left( \alpha -n\right) +1;\Psi }\mathcal{I}_{a+}^{\xi ;\Psi }f
\left( x\right),
\end{equation}
with $n-1<\alpha\leq n$ and $0\leq \beta \leq 1$.

Thus, taking the product of $f$ and $g$ and substituting in Eq.(\ref{eq14}), we have
\begin{equation}\label{eq15}
^{\mathbf{H}}\mathfrak{D}_{a+}^{\alpha ,\beta ;\Psi }\left( fg\right) \left( x\right) =\text{ }
^{\mathbf{C}}\mathfrak{D}_{a+}^{\beta \left( \alpha -n\right) +1;\Psi }\mathcal{I}_{a+}^{\xi ;\Psi }\left( fg\right) \left( x\right) 
\end{equation}

Substituting Eq.(\ref{fu}) into Eq.(\ref{eq15}), we get
\begin{eqnarray}\label{eq17}
^{\mathbf{H}}\mathfrak{D}_{a+}^{\alpha ,\beta ;\Psi }\left( fg\right) \left( x\right)
&=&\text{ }^{\mathbf{C}}\mathfrak{D}_{a+}^{\alpha-\xi;\Psi }\overset{\infty }{\underset{k=0}{\sum }}\left( 
\begin{array}{c}
-\xi \\ 
k
\end{array}
\right) f^{\left( k\right) }\left( x\right) \mathcal{I}_{a+}^{\xi +k;\Psi }g\left( x\right)  \notag \\ &=&\overset{\infty }{\underset{k=0}{\sum }}\left( 
\begin{array}{c}
-\xi \\ 
k
\end{array}
\right) \text{ }^{\mathbf{C}}\mathfrak{D}_{a+}^{\alpha-\xi;\Psi }\left(f^{\left( k\right) }\left( x\right) \mathcal{I}_{a+}^{\xi +k;\Psi }g\left( x\right) \right).\notag \\
\end{eqnarray}

The generalization of the Leibniz rule for the $\Psi-$C fractional derivative in terms of the $\Psi-$RL fractional derivative is given by
\begin{eqnarray}\label{eq18}
^{\mathbf{C}}\mathfrak{D}_{a+}^{\alpha ;\Psi }\left( fg\right) \left( x\right) &=&\overset{\infty }{\underset{k=0}{\sum }}\left( 
\begin{array}{c}
\alpha \\ 
k%
\end{array}%
\right) f^{\left( k\right) }\left( x\right) \text{ }^{\mathbf{RL}}\mathfrak{D}_{a+}^{\alpha -k;\Psi }g\left( x\right)  \notag \\&&-\underset{k=0}{\overset{n-1}{\sum }}
\frac{\dfrac{d^{k}}{dx^{k}}\left( f\left( x\right) g\left( x\right) \right) \left( a\right) }{\Gamma \left( k-\alpha +1\right) }\left( \Psi \left( x\right) -\Psi 
\left( a\right)\right) ^{k-\alpha }.
\end{eqnarray}

Using the Leibniz rule for the $\Psi-$C fractional derivative, substituting Eq.(\ref{eq17}) into Eq.(\ref{eq18}), we have
\begin{eqnarray}\label{eq19}
^{\mathbf{H}}\mathfrak{D}_{a+}^{\alpha ,\beta ;\Psi }\left( fg\right) \left( x\right) &=&\overset{\infty }{\underset{k=0}{\sum }}\left( 
\begin{array}{c}
-\xi \\ 
k%
\end{array}%
\right) \overset{\infty }{\underset{l=0}{\sum }}\left( 
\begin{array}{c}
\alpha-\xi \\ 
l%
\end{array}%
\right) f^{\left( k+l\right) }\left( x\right) \text{ }^{\mathbf{RL}}\mathfrak{D}_{a+}^{\alpha-\xi-l;\Psi }\mathcal{I}_{a+}^{\xi +k;\Psi }g\left( x\right)  \notag \\
&&-\overset{\infty }{\underset{k=0}{\sum }}\left( 
\begin{array}{c}
-\xi \\ 
k%
\end{array}%
\right) \mathcal{I}_{a+}^{\xi +k;\Psi }g\left( a\right) f^{\left( k\right) }\left( a\right) \frac{\left( \Psi \left( x\right) -\Psi \left( a\right) \right) ^{\xi-\alpha }}{\Gamma 
\left( \beta \left( 1-\alpha \right) \right) }.
\end{eqnarray}

Considering the index change $m \rightarrow k + l$ in the first sum of Eq.(\ref{eq19}), we have
\begin{eqnarray}\label{eq20}
^{\mathbf{H}}\mathfrak{D}^{\alpha ,\beta ;\Psi }\left( fg\right) \left( x\right)
&=&\overset{\infty }{\underset{l=0}{\sum }}\overset{\infty }{\underset{m=0}{\sum }}\left( 
\begin{array}{c}
-\xi \\ 
m-l%
\end{array}%
\right) \left( 
\begin{array}{c}
\alpha-\xi \\ 
l%
\end{array}%
\right) f^{\left( m\right) }\left( x\right) \text{ }^{\mathbf{RL}}\mathfrak{D}^{\alpha-\xi-l;\Psi }\mathcal{I}_{a+}^{\xi +m-l;\Psi }g\left( x\right)  \notag \\
&&-\overset{\infty }{\underset{k=0}{\sum }}\left( 
\begin{array}{c}
-\xi \\ 
k%
\end{array}%
\right) \mathcal{I}_{a+}^{\xi +k;\Psi }g\left( a\right) f^{\left( k\right) }\left( a\right) \frac{\left( \Psi \left( x\right) -\Psi \left( a\right) \right) ^{\xi-\alpha }}{\Gamma 
\left( \beta \left( 1-\alpha \right) \right) }.
\end{eqnarray}

Note that
\begin{eqnarray*}
^{\mathbf{RL}}\mathfrak{D}_{a+}^{\gamma -l;\Psi }\mathcal{I}_{a+}^{\mu -l;\Psi }h\left( x\right) &=&\left( \frac{1}{\Psi ^{\prime }\left( x\right) }\frac{d}{dx}\right) 
\mathcal{I}_{a+}^{1-\gamma +l;\Psi }\mathcal{I}_{a+}^{\mu -l;\Psi }h\left( x\right)  \notag \\&=&\left( \frac{1}{\Psi ^{\prime }\left( x\right) }\frac{d}{dx}\right) 
\mathcal{I}_{a+}^{1-\gamma +\mu ;\Psi }h\left( x\right)  \notag \\
&=&\text{ }^{\mathbf{RL}}\mathfrak{D}_{a+}^{\alpha -\beta ;\Psi }h\left( x\right) .
\end{eqnarray*}

Thus, considering $\gamma =\alpha-\xi$ and $\mu =\xi +m$, we get
\begin{equation}\label{eq21}
^{\mathbf{RL}}\mathfrak{D}_{a+}^{\alpha-\xi-l;\Psi }\mathcal{I}_{a+}^{\xi +m-l;\Psi }h\left( x\right) =\text{ }^{\mathbf{RL}}\mathfrak{D}_{a+}^{\alpha -m;\Psi }h\left( x\right) .
\end{equation}

Substituting Eq.(\ref{eq21}) into Eq.(\ref{eq20}), we conclude 
\begin{eqnarray*}
^{\mathbf{H}}\mathfrak{D}_{a+}^{\alpha ,\beta ;\Psi }\left( fg\right) \left( x\right) &=&\overset{\infty }{\underset{l=0}{\sum }}\overset{\infty }{\underset{m=0}{%
\sum }}\left( 
\begin{array}{c}
-\xi \\ 
m-l%
\end{array}%
\right) \left( 
\begin{array}{c}
\alpha-\xi \\ 
l%
\end{array}%
\right) f^{\left( m\right) }\left( x\right) \text{ }^{\mathbf{RL}}\mathfrak{D}_{a+}^{\alpha
-m;\Psi }g\left( x\right)  \notag \\
&&-\overset{\infty }{\underset{k=0}{\sum }}\left( 
\begin{array}{c}
-\xi \\ 
k%
\end{array}%
\right) \mathcal{I}_{a+}^{\xi +k;\Psi }g\left( a\right) f^{\left( k\right) }\left( a\right) \frac{\left( \Psi \left( x\right) -\Psi \left( a\right) \right) ^{\xi-\alpha }}{\Gamma 
\left( \beta \left( 1-\alpha \right) \right) },
\end{eqnarray*}
which is the desired result.
\end{proof}

Here we present the Leibniz type rule in terms of the $\Psi-$H fractional derivative and from this formulation we recover some important particular cases 
for the context of fractional derivatives.

\begin{theorem} {\rm (Leibniz Type Rule II).} Let $0<\alpha <1$ and $\Lambda =\left[ a,b\right]$ an interval such that $-\infty \leq a<b\leq \infty $, $\Psi $ a function
increasing such that $\Psi ^{\prime }\left( x\right) \neq 0$, $t\in \Lambda $ and $\Psi \in C\left( \Lambda ,\mathbb{R}\right) $, $f,g\in C\left( \Lambda ,\mathbb{R}\right) $. 
Then, we have
\begin{eqnarray}\label{eq22}
^{\mathbf{H}}\mathfrak{D}_{a+}^{\alpha ,\beta ;\Psi }\left( fg\right) \left( x\right) &=&\overset{\infty }{\underset{m=0}{\sum }}\left( 
\begin{array}{c}
\alpha  \\ 
m
\end{array}%
\right) f^{\left( m\right) }\left( x\right) \text{ }^{\mathbf{H}}\mathfrak{D}_{a+}^{\alpha -m,\beta ;\Psi }g\left( x\right) + \Omega_{f,g}(\alpha,\beta,a) \notag \\
\end{eqnarray}
with 
$$
\Omega_{f,g}(\alpha,\beta,a)=\underset{k=0}{\overset{\infty }{\sum }}\left( \begin{array}{c} -\xi  \\ k\end{array} \right) \mathcal{I}_{a+}^{\xi +k;\Psi }g
\left( a\right) \left( f^{\left( k\right) }\left( x\right) -f^{\left( k\right) }\left( a\right) \right) \dfrac{\left( \Psi \left( x\right) -\Psi \left( a\right) 
\right)^{\xi-\alpha}}{\Gamma \left( \beta -\alpha \beta \right) }.
$$
\end{theorem}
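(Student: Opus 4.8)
The plan is to obtain \eqref{eq22} directly from the Leibniz Type Rule I in \eqref{eq13}, since the two statements are proved under identical hypotheses and, because $\beta-\alpha\beta=\beta(1-\alpha)$, share the same denominator in their boundary terms. First I would split $\Omega_{f,g}$ into its two natural pieces,
$$\Omega_{f,g}(\alpha,\beta,a)=\sum_{k=0}^{\infty}\binom{-\xi}{k}\mathcal{I}_{a+}^{\xi+k;\Psi}g(a)\,f^{(k)}(x)\frac{(\Psi(x)-\Psi(a))^{\xi-\alpha}}{\Gamma(\beta(1-\alpha))}-\sum_{k=0}^{\infty}\binom{-\xi}{k}\mathcal{I}_{a+}^{\xi+k;\Psi}g(a)\,f^{(k)}(a)\frac{(\Psi(x)-\Psi(a))^{\xi-\alpha}}{\Gamma(\beta(1-\alpha))},$$
and observe that the second ("$f^{(k)}(a)$") piece is \emph{verbatim} the boundary sum appearing in \eqref{eq13}. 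Consequently, proving \eqref{eq22} reduces to the single series identity
$$\sum_{l=0}^{\infty}\sum_{m=0}^{\infty}\binom{-\xi}{m-l}\binom{\alpha-\xi}{l}f^{(m)}(x)\,{}^{\mathbf{RL}}\mathfrak{D}_{a+}^{\alpha-m;\Psi}g(x)=\sum_{m=0}^{\infty}\binom{\alpha}{m}f^{(m)}(x)\,{}^{\mathbf{H}}\mathfrak{D}_{a+}^{\alpha-m,\beta;\Psi}g(x)+\sum_{k=0}^{\infty}\binom{-\xi}{k}\mathcal{I}_{a+}^{\xi+k;\Psi}g(a)\,f^{(k)}(x)\frac{(\Psi(x)-\Psi(a))^{\xi-\alpha}}{\Gamma(\beta(1-\alpha))},$$
that is, to converting the $\Psi$-RL derivatives of $g$ carried by Rule I into $\Psi$-H derivatives of $g$.

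Next I would establish this identity termwise in $m$. Collecting the coefficient of each $f^{(m)}(x)$ on the left and applying the Vandermonde-type convolution for the generalized binomial coefficients — the same kind of manipulation already performed in \eqref{eq12} — collapses the inner $l$-sum into a single generalized binomial factor. To pass from the resulting ${}^{\mathbf{RL}}\mathfrak{D}_{a+}^{\alpha-m;\Psi}g$ to ${}^{\mathbf{H}}\mathfrak{D}_{a+}^{\alpha-m,\beta;\Psi}g$ I would invoke the relation \eqref{eq4}, namely ${}^{\mathbf{H}}\mathfrak{D}^{\nu,\beta;\Psi}=\mathcal{I}_{a+}^{\beta(1-\nu);\Psi}\,{}^{\mathbf{RL}}\mathfrak{D}^{\nu+\beta(1-\nu);\Psi}$ with $\nu=\alpha-m$, together with the semigroup law for the $\Psi$-RL integrals that was used in the passage from \eqref{eq20} to \eqref{eq21}. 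Under this conversion the binomial coefficient should simultaneously readjust to $\binom{\alpha}{m}$ while producing an explicit initial-value remainder proportional to $\mathcal{I}_{a+}^{\xi+k;\Psi}g(a)\,(\Psi(x)-\Psi(a))^{\xi-\alpha}/\Gamma(\beta(1-\alpha))$; the content of the claim is precisely that, after summation, these remainders assemble into the first ("$f^{(k)}(x)$") piece of $\Omega_{f,g}$ isolated above. Substituting the identity and the split form of $\Omega_{f,g}$ back into \eqref{eq13} then yields \eqref{eq22}.

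The hard part will be the bookkeeping in this last conversion: tracking the fractional orders through the compositions $\mathcal{I}_{a+}^{\beta(1-\nu);\Psi}\,{}^{\mathbf{RL}}\mathfrak{D}^{\,\cdots;\Psi}$ for the possibly negative orders $\nu=\alpha-m$, and verifying that the generalized binomials recombine \emph{exactly} into $\binom{\alpha}{m}$ while every stray boundary contribution lands in $\Omega_{f,g}$ with the correct sign and denominator $\Gamma(\beta-\alpha\beta)$, with no leftover terms. Alongside this I would need to justify the analytic manipulations that Lemmas \ref{lemma1} and \ref{lemma2} take for granted — the Taylor-type expansion \eqref{eq7} of $f$ and $g$, the interchange of the two infinite summations, and the termwise application of \eqref{eq4} — so that the formal series identity holds genuinely under the stated continuity hypotheses.
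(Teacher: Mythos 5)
Your plan --- deriving Rule II from Rule I by splitting $\Omega_{f,g}$ and then converting the $\Psi$-RL derivatives of $g$ in Eq.(\ref{eq13}) into $\Psi$-H derivatives termwise --- is not the paper's route, and its central step does not close. The conversion you invoke, Eq.(\ref{eq4}) with $\nu=\alpha-m$, produces (via the commutation defect $D\,\mathcal{I}_{a+}^{\sigma;\Psi}h-\mathcal{I}_{a+}^{\sigma;\Psi}D\,h=\frac{(\Psi(x)-\Psi(a))^{\sigma-1}}{\Gamma(\sigma)}h(a)$ applied with $h=\mathcal{I}_{a+}^{(1-\beta)(1-\nu);\Psi}g$, $\sigma=\beta(1-\nu)$) the remainder
\begin{equation*}
{}^{\mathbf{RL}}\mathfrak{D}_{a+}^{\nu;\Psi}g\left(x\right)-{}^{\mathbf{H}}\mathfrak{D}_{a+}^{\nu,\beta;\Psi}g\left(x\right)=\frac{\left(\Psi\left(x\right)-\Psi\left(a\right)\right)^{\beta\left(1-\nu\right)-1}}{\Gamma\left(\beta\left(1-\nu\right)\right)}\,\mathcal{I}_{a+}^{\left(1-\beta\right)\left(1-\nu\right);\Psi}g\left(a\right),
\end{equation*}
so at $\nu=\alpha-m$ each stray term carries the $m$-dependent exponent $\beta(1-\alpha+m)-1$ and the initial value $\mathcal{I}_{a+}^{\xi+m(1-\beta);\Psi}g(a)$. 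Such terms cannot assemble into the first half of $\Omega_{f,g}$, which has a single $m$-independent power $(\Psi(x)-\Psi(a))^{\xi-\alpha}$, weights $\binom{-\xi}{k}$, and integrals $\mathcal{I}_{a+}^{\xi+k;\Psi}g(a)$. Even under the convention the paper implicitly fixes for negative orders through Eq.(\ref{eq29}), namely ${}^{\mathbf{H}}\mathfrak{D}_{a+}^{\alpha-m,\beta;\Psi}={}^{\mathbf{H}}\mathfrak{D}_{a+}^{\alpha,\beta;\Psi}\mathcal{I}_{a+}^{m;\Psi}$, the defect is $\frac{(\Psi(x)-\Psi(a))^{\beta(1-\alpha)-1}}{\Gamma(\beta(1-\alpha))}\mathcal{I}_{a+}^{\xi+m;\Psi}g(a)$ with outer weight $\binom{\alpha}{m}$: the integrals now match, but the exponent $\beta(1-\alpha)-1$ differs from $\xi-\alpha$ by $2\xi$ and the weight is $\binom{\alpha}{m}$, not $\binom{-\xi}{m}$. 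Relatedly, your statement that under the conversion ``the binomial coefficient should simultaneously readjust to $\binom{\alpha}{m}$'' names no mechanism: rewriting the operator acting on $g$ cannot alter the coefficient standing outside it, which is already fixed by your Vandermonde collapse of the $l$-sum. And note that ${}^{\mathbf{H}}\mathfrak{D}^{\alpha-m,\beta;\Psi}$ for $m\geq 1$ is only given meaning in the paper by Eq.(\ref{eq29}); instantiating Eq.(\ref{eq4}) literally at order $\alpha-m$ defines a different operator, and with that reading the series identity you reduced the theorem to is false on its face.

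The paper's own proof never passes through Rule I. It reruns the same scheme --- write ${}^{\mathbf{H}}\mathfrak{D}_{a+}^{\alpha,\beta;\Psi}(fg)={}^{\mathbf{C}}\mathfrak{D}_{a+}^{\beta(\alpha-1)+1;\Psi}\mathcal{I}_{a+}^{\xi;\Psi}(fg)$ and expand $\mathcal{I}_{a+}^{\xi;\Psi}(fg)$ by Lemma \ref{lemma2} --- but then replaces the Caputo-in-terms-of-RL Leibniz rule Eq.(\ref{eq18}), the ingredient behind Rule I, by the Caputo-in-terms-of-Caputo rule Eq.(\ref{eq26}), whose built-in correction $g(a)\left(f(x)-f(a)\right)\frac{(\Psi(x)-\Psi(a))^{-\alpha}}{\Gamma(1-\alpha)}$ already carries the difference structure $f^{(k)}(x)-f^{(k)}(a)$. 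Applied to the pairs $\left(f^{(k)},\mathcal{I}_{a+}^{\xi+k;\Psi}g\right)$, this yields $\Omega_{f,g}$ in one stroke --- it is not a reassembly of conversion remainders --- and the passage to H-derivatives is the composition identity Eqs.(\ref{eq28})--(\ref{eq29}), ${}^{\mathbf{C}}\mathfrak{D}_{a+}^{\alpha-\xi-s;\Psi}\mathcal{I}_{a+}^{\xi+m-s;\Psi}={}^{\mathbf{H}}\mathfrak{D}_{a+}^{\alpha-m,\beta;\Psi}$, followed by the Vandermonde convolution you cite. The missing idea in your proposal is precisely Eq.(\ref{eq26}): without a product rule whose boundary term already has the $f^{(k)}(x)-f^{(k)}(a)$ form, the RL-to-H bookkeeping you defer to ``the hard part'' does not, and cannot, produce $\Omega_{f,g}$ as stated.
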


\begin{proof} For the proof of Eq.(\ref{eq22}), let us use the relation between the $\Psi-$H fractional derivative and $\Psi-$C fractional derivative, given by
\begin{equation}\label{eq23}
^{\mathbf{H}}\mathfrak{D}_{a+}^{\alpha ,\beta ;\Psi }f\left( x\right) =\text{ }^{\mathbf{C}}\mathfrak{D}_{a+}^{\beta
\left( \alpha -n\right) +1;\Psi }\mathcal{I}_{a+}^{\xi ;\Psi }f\left( x\right).
\end{equation}

Thus, taking the product of $f$ and $g$ and substituting in Eq.(\ref{eq23}), we have
\begin{equation}\label{eq24}
^{\mathbf{H}}\mathfrak{D}_{a+}^{\alpha ,\beta ;\Psi }\left( fg\right) \left( x\right) =\text{ }^{\mathbf{C}}\mathfrak{D}_{a+}^{\beta \left( \alpha -n\right) +1;\Psi }
\mathcal{I}_{a+}^{\xi ;\Psi }\left( fg\right) \left( x\right).
\end{equation}

Introducing Eq.(\ref{eq13}) into Eq.(\ref{eq24}), it follows that
\begin{eqnarray}\label{eq25}
^{\mathbf{H}}\mathfrak{D}_{a+}^{\alpha ,\beta ;\Psi }\left( fg\right) \left( x\right)  &=&\text{ }^{\mathbf{C}}\mathfrak{D}_{a+}^{\alpha-\xi;\Psi }\overset{\infty }{\underset{k=0}{\sum }}\left( 
\begin{array}{c}
-\xi \\ 
k%
\end{array}%
\right) f^{\left( k\right) }\left( x\right) \mathcal{I}_{a+}^{\xi +k;\Psi }g\left( x\right)  \notag \\
&=&\overset{\infty }{\underset{k=0}{\sum }}\left( 
\begin{array}{c}
-\xi \\ 
k%
\end{array}%
\right) \text{ }^{\mathbf{C}}\mathfrak{D}_{a+}^{\alpha-\xi;\Psi }\left( f^{\left( k\right) }\left( x\right) \mathcal{I}_{a+}^{\xi +k;\Psi }g\left( x\right) \right).\notag \\
\end{eqnarray}

The generalization of the Leibniz rule to the $\Psi-$C fractional derivative in terms of the $\Psi-$C fractional derivatives, is given by
\begin{equation}\label{eq26}
^{C}\mathfrak{D}_{a+}^{\alpha ;\Psi }\left( fg\right) \left( x\right) =\overset{\infty }{\underset{k=0}{\sum }}\left( 
\begin{array}{c}
\alpha \\ 
k%
\end{array}%
\right) f^{\left( k\right) }\left( x\right) \text{ }^{\mathbf{C}}\mathfrak{D}_{a+}^{\alpha -k;\Psi }g\left( x\right) +g\left( a\right) \left( f\left( t\right) -f
\left( a\right) \right) \frac{\left( \Psi \left( x\right) -\Psi \left( a\right) \right) ^{-\alpha }}{\Gamma \left( 1-\alpha \right) }.
\end{equation}

Substituting Eq.(\ref{eq26}) into Eq.(\ref{eq25}), we get
\newpage
\begin{eqnarray*}
^{\mathbf{H}}\mathfrak{D}_{a+}^{\alpha ,\beta ;\Psi }\left( fg\right) \left( x\right) 
&=&\text{ }\overset{\infty }{\underset{k=0}{\sum }}\left( 
\begin{array}{c}
-\xi  \\ 
k%
\end{array}%
\right) \left\{ \underset{s=0}{\overset{\infty }{\sum }}\left( 
\begin{array}{c}
\alpha-\xi \\ 
s%
\end{array}%
\right) f^{\left( k+s\right) }\left( x\right) \text{ }^{\mathbf{C}}\mathfrak{D}_{a+}^{\alpha-\xi-s;\Psi }\mathcal{I}_{a+}^{\xi +k;\Psi }g\left( x\right) \right\}   \notag \\
&&+\overset{\infty }{\underset{k=0}{\sum }}\left( 
\begin{array}{c}
-\xi  \\ 
k%
\end{array}%
\right) \text{ }\mathcal{I}_{a+}^{\xi +k;\Psi }g\left( a\right) \left( f^{\left( k\right) }\left( x\right) -f^{\left( k\right) }\left( a\right) \right) 
\frac{\left( \Psi \left( x\right) -\Psi \left( a\right) \right) ^{\xi-\alpha}}{\Gamma \left( \beta -\alpha \beta \right) }.
\end{eqnarray*}

Considering the change of index $k=m-s$, we have
\begin{eqnarray}\label{eq27}
^{\mathbf{H}}\mathfrak{D}_{a+}^{\alpha ,\beta ;\Psi }\left( fg\right) \left( x\right) 
&=&\text{ }\overset{\infty }{\underset{s=0}{\sum }}\underset{m=0}{\overset{%
\infty }{\sum }}\left( 
\begin{array}{c}
-\xi  \\ 
m-s%
\end{array}%
\right) \left( 
\begin{array}{c}
\alpha-\xi \\ 
s%
\end{array}%
\right) f^{\left( m\right) }\left( x\right) \text{ }^{\mathbf{C}}\mathfrak{D}_{a+}^{\alpha-\xi-s;\Psi }\mathcal{I}_{a+}^{\xi +m-s;\Psi }g\left( x\right)   \notag \\
&&+\overset{\infty }{\underset{k=0}{\sum }}\left( 
\begin{array}{c}
-\xi  \\ 
k%
\end{array}%
\right) \text{ }\mathcal{I}_{a+}^{\xi +k;\Psi }g\left( a\right) \left( f^{\left( k\right) }\left( x\right) -f^{\left( k\right) }\left( a\right) \right) \frac{\left( \Psi \left(
x\right) -\Psi \left( a\right) \right) ^{\xi-\alpha}}{\Gamma \left( \beta -\alpha \beta \right) }.
\end{eqnarray}

Note that, for $0<\overline{\alpha }<1,$ $0<\overline{\beta }<1$ and $s\in\mathbb{N}$ we have the identity
\begin{eqnarray}\label{eq28}
^{\mathbf{C}}\mathfrak{D}_{a+}^{\overline{\alpha }-s;\Psi }\mathcal{I}_{a+}^{\overline{\beta }-s;\Psi }\left( \cdot \right) &=&\mathcal{I}_{a+}^{1-s-
\left( \overline{\alpha }-s\right) ;\Psi }D^{1-s;\Psi }\mathcal{I}_{a+}^{\overline{\beta }-s;\Psi }\left( \cdot \right) 
\notag \\
&=&\mathcal{I}_{a+}^{1-\overline{\alpha };\Psi }D^{1;\Psi }D^{-s;\Psi }\mathcal{I}_{a+}^{-s;\Psi }\mathcal{I}_{a+}^{\overline{\beta };\Psi }\left( \cdot \right)  \notag \\ 
&=&\mathcal{I}_{a+}^{1-\overline{\alpha };\Psi }D^{1;\Psi }\mathcal{I}_{a+}^{\overline{\beta };\Psi }\left( \cdot \right).
\end{eqnarray}

Taking $\overline{\alpha }=\alpha-\xi$ e $\overline{\beta }=m+\xi $ in Eq.(\ref{eq28}), we obtain
\begin{eqnarray}\label{eq29}
^{\mathbf{C}}\mathfrak{D}_{a+}^{\alpha-\xi-s;\Psi }I_{a+}^{m+\xi ;\Psi }\left( \cdot \right) &=&\mathcal{I}_{a+}^{\beta \left( 1-\alpha \right) ;\Psi }D^{1;\Psi }
\mathcal{I}_{a+}^{\xi ;\Psi }\mathcal{I}_{a+}^{m;\Psi }\left( \cdot \right)  \notag \\ 
&=&\text{ }^{\mathbf{H}}\mathfrak{D}_{a+}^{\alpha ,\beta ;\Psi }\mathcal{I}_{a+}^{m;\Psi }\left( \cdot \right)  \notag \\
&=&\text{ }^{\mathbf{H}}\mathfrak{D}_{a+}^{\alpha -m,\beta ;\Psi }\left( \cdot \right).
\end{eqnarray}

Substituting Eq.(\ref{eq29}) into Eq.(\ref{eq27}), it follows
\begin{eqnarray*}
^{\mathbf{H}}\mathfrak{D}_{a+}^{\alpha ,\beta ;\Psi }\left( fg\right) \left( x\right) 
&=&\text{ }\overset{\infty }{\underset{s=0}{\sum }}\underset{m=0}{\overset{%
\infty }{\sum }}\left( 
\begin{array}{c}
-\xi  \\ 
m-s%
\end{array}
\right) \left( 
\begin{array}{c}
\alpha-\xi \\ 
s%
\end{array}%
\right) f^{\left( m\right) }\left( x\right) \text{ }^{\mathbf{H}}\mathfrak{D}_{a+}^{\alpha -m,\beta ;\Psi }g\left( x\right)   \notag \\
&&+\overset{\infty }{\underset{k=0}{\sum }}\left( 
\begin{array}{c}
-\xi  \\ 
k
\end{array}
\right) \text{ }\mathcal{I}_{a+}^{\xi +k;\Psi }g\left( a\right) \left( f^{\left( k\right) }\left( x\right) -f^{\left( k\right) }\left( a\right) \right) 
\frac{\left( \Psi \left( x\right) -\Psi \left( a\right) \right) ^{\xi-\alpha}}{\Gamma \left( \beta -\alpha \beta \right) }.
\end{eqnarray*}

Using the relation
\begin{equation*}
\overset{\infty }{\underset{n=0}{\sum }}\left( 
\begin{array}{c}
\alpha  \\ 
m-n%
\end{array}%
\right) \left( 
\begin{array}{c}
\beta  \\ 
n%
\end{array}%
\right) =\left( 
\begin{array}{c}
\alpha +\beta  \\ 
n%
\end{array}%
\right),
\end{equation*}
we can write
\begin{eqnarray*}
^{\mathbf{H}}\mathfrak{D}_{a+}^{\alpha ,\beta ;\Psi }\left( fg\right) \left( x\right) &=&\overset{\infty }{\underset{m=0}{\sum }}\left( 
\begin{array}{c}
\alpha  \\ 
m
\end{array}%
\right) f^{\left( m\right) }\left( x\right) \text{ }^{\mathbf{H}}\mathfrak{D}_{a+}^{\alpha -m,\beta ;\Psi }g\left( x\right) + \Omega_{f,g}(\alpha,\beta,a) \notag \\
\end{eqnarray*}
with 
$$
\Omega_{f,g}(\alpha,\beta,a)=\underset{k=0}{\overset{\infty }{\sum }}\left( \begin{array}{c} -\xi  \\ k\end{array} \right) \mathcal{I}_{a+}^{\xi +k;\Psi }
g\left( a\right) \left( f^{\left( k\right) }\left( x\right) -f^{\left( k\right) }\left( a\right) \right) \dfrac{\left( \Psi \left( x\right) -\Psi \left( a\right) 
\right) ^{\xi-\alpha}}{\Gamma \left( \beta -\alpha \beta \right) }
$$ 
which is the desired result.
\end{proof}

Note that the Leibniz rule for the $\Psi-$H fractional derivative is not simply
\begin{equation*}
^{\mathbf{H}}\mathfrak{D}_{a+}^{\alpha ,\beta ;\Psi }\left( fg\right) \left( x\right)=\left(^{\mathbf{H}}\mathfrak{D}_{a+}^{\alpha ,\beta ;\Psi }f\left( x\right) \right) g\left( x\right) +
\text{ }f\left( x\right) \text{ }\left( ^{\mathbf{H}}\mathfrak{D}_{a+}^{\alpha ,\beta ;\Psi}g\left( x\right) \right).
\end{equation*}

\begin{remark} One of the conditions for a given differential operator to be considered fractional as {\rm \cite{ortigueira}}, is to satisfy Leibniz's rule. 
However, some traders are not satisfied, since an extra portion appears. Many fractional derivatives in particular, the Caputo fractional derivative, one of the most used and 
essential in the study of memory effects and under differential equations with initial conditions, does not satisfy the Leibniz rule.
In this sense, since few derivatives satisfy and others do not satisfy the Leibniz rule, we introduce the Leibniz rule for the $\Psi-$H fractional derivative, 
since from this it is possible to obtain, as particular cases, a broad class of derived fractions, consequently their respective Leibniz rule and Leibniz type rule as well.
\end{remark}

In this sense, we present the Leibniz type rule in two versions for the $\Psi-$H fractional derivative, however for continuity of work, let's stop the version of the 
Leibniz type rule, according to Theorem 2.

Here are a few special cases of the Leibniz rule from the $\Psi\left(\cdot \right)$ function and the $\beta \rightarrow 0$ and $\beta \rightarrow 1$ limits. 
For the particular cases presented below, we suggest \cite{SAMKO,POD,tarasov,tarasov1,tarasov2,tarasov3,jumarie,wang,diethelm1}.

\begin{enumerate}
\item Taking the limit $\beta \rightarrow 0$ in both sides of Eq.(\ref{eq22}), we get the Leibniz rule for the $\Psi-$RL fractional derivative,
\begin{eqnarray*}
^{\mathbf{H}}\mathfrak{D}_{a+}^{\alpha ,0;\Psi }\left( fg\right) \left( x\right) &=&\overset{\infty }{\underset{m=0}{\sum }}\left( 
\begin{array}{c}
\alpha  \\ 
m%
\end{array}%
\right) f^{\left( m\right) }\left( x\right) \text{ }^{\mathbf{RL}}\mathfrak{D}_{a+}^{\alpha -m;\Psi }g\left( x\right)   \notag \\
&&+\underset{k=0}{\overset{\infty }{\sum }}\left( 
\begin{array}{c}
\alpha -1 \\ 
k%
\end{array}%
\right) \mathcal{I}_{a+}^{\left( 1-\alpha \right) +k;\Psi }g\left( a\right) \left( f^{\left( k\right) }\left( x\right) -f^{\left( k\right) }\left( a\right) \right) 
\left( \Psi \left( x\right) -\Psi \left( a\right) \right) ^{-1} 
\notag \\
&=&\overset{\infty }{\underset{m=0}{\sum }}\left( 
\begin{array}{c}
\alpha  \\ 
m%
\end{array}%
\right) f^{\left( m\right) }\left( x\right) \text{ }^{\mathbf{RL}}\mathfrak{D}_{a+}^{\alpha -m;\Psi }g\left( x\right) =\text{ }^{\mathbf{RL}}\mathfrak{D}_{a+}^{\alpha ;\Psi }
\left( fg\right) \left( x\right) .
\end{eqnarray*}

\item Taking $\Psi \left( x\right) =x$ and considering the limit $\beta \rightarrow 0$ in both sides of Eq.(\ref{eq22}), we get the Leibniz rule for the Riemann-Liouville 
fractional derivative,
\begin{eqnarray*}
^{\mathbf{H}}\mathfrak{D}_{a+}^{\alpha ,0;x}\left( fg\right) \left( x\right)  &=&\overset{\infty }{\underset{m=0}{\sum }}\left( 
\begin{array}{c}
\alpha  \\ 
m%
\end{array}%
\right) f^{\left( m\right) }\left( x\right) \text{ }^{\mathbf{RL}}\mathfrak{D}^{\alpha -m}g\left( x\right)   \notag \\
&&+\underset{k=0}{\overset{\infty }{\sum }}\left( 
\begin{array}{c}
\alpha -1 \\ 
k%
\end{array}%
\right) \mathcal{I}_{a+}^{\left( 1-\alpha \right) +k}g\left( a\right) \left( f^{\left( k\right) }\left( x\right) -f^{\left( k\right) }\left( a\right) \right) \left( x-a\right) ^{-1}  \notag \\
&=&\overset{\infty }{\underset{m=0}{\sum }}\left( 
\begin{array}{c}
\alpha  \\ 
m%
\end{array}%
\right) f^{\left( m\right) }\left( x\right) \text{ }^{\mathbf{RL}}\mathfrak{D}_{a+}^{\alpha -m}g\left( x\right) =\text{ }^{\mathbf{RL}}\mathfrak{D}_{a+}^{\alpha }\left( fg\right) \left(x\right) .
\end{eqnarray*}

\item Taking the limit $\beta \rightarrow 1$ in both sides of Eq.(\ref{eq22}), we get the Leibniz rule for the $\Psi-$C fractional derivative
\begin{eqnarray*}
^{\mathbf{H}}\mathfrak{D}_{a+}^{\alpha ,1;\Psi }\left( fg\right) \left( x\right)  &=&\overset{\infty }{\underset{m=0}{\sum }}\left( 
\begin{array}{c}
\alpha  \\ 
m%
\end{array}%
\right) f^{\left( m\right) }\left( x\right) \text{ }^{\mathbf{C}}\mathfrak{D}_{a+}^{\alpha -m;\Psi }g\left( x\right)   \notag \\&&+\underset{k=0}{\overset{\infty }{\sum }}\left( 
\begin{array}{c}
0 \\ 
k%
\end{array}%
\right) \mathcal{I}_{a+}^{k;\Psi }g\left( a\right) \left( f^{\left( k\right) }\left( x\right) -f^{\left( k\right) }\left( a\right) \right) \frac{\left( \Psi 
\left( x\right) -\Psi \left( a\right) \right) }{\Gamma \left( 1-\alpha \right) }^{-\alpha }  \notag \\ &=&\overset{\infty }{\underset{m=0}{\sum }}\left( 
\begin{array}{c}
\alpha  \\ 
m%
\end{array}%
\right) f^{\left( m\right) }\left( x\right) \text{ }^{\mathbf{C}}\mathfrak{D}_{a+}^{\alpha -m;\Psi }g\left( x\right) +\Omega^{1}_{f,g}(\alpha,a) \notag \\ 
&=&\text{ }^{\mathbf{C}}\mathfrak{D}_{a+}^{\alpha ;\Psi }\left( fg\right) \left( x\right),
\end{eqnarray*}
with 
$$
\Omega_{f,g}(\alpha,a)= g\left( a\right) \left( f^{\left( k\right) }\left( x\right) -f^{\left( k\right) }\left( a\right) \right) \dfrac{\left( \Psi 
\left( x\right) -\Psi \left( a\right) \right) }{\Gamma \left( 1-\alpha \right) }^{-\alpha }.
$$

\item Considering $\Psi \left( x\right) =x$ and taking the limit $\beta \rightarrow 1$ in both sides of Eq.(\ref{eq22}), we obtain the Leibniz rule for the Caputo fractional derivative
\begin{eqnarray*}
^{\mathbf{H}}\mathfrak{D}_{a+}^{\alpha ,1;x}\left( fg\right) \left( x\right)  &=&\overset{\infty }{\underset{m=0}{\sum }}\left( 
\begin{array}{c}
\alpha  \\ 
m%
\end{array}%
\right) f^{\left( m\right) }\left( x\right) \text{ }^{\mathbf{C}}\mathfrak{D}_{a+}^{\alpha -m}g\left( x\right)   \notag \\&&+\underset{k=0}{\overset{\infty }{\sum }}\left( 
\begin{array}{c}
0 \\ 
k%
\end{array}%
\right) I_{a+}^{k}g\left( a\right) \left( f^{\left( k\right) }\left( x\right) -f^{\left( k\right) }\left( a\right) \right) \frac{\left( x-a\right) ^{-\alpha }}
{\Gamma \left( 1-\alpha \right) }  \notag \\
&=&\overset{\infty }{\underset{m=0}{\sum }}\left( 
\begin{array}{c}
\alpha  \\ 
m%
\end{array}%
\right) f^{\left( m\right) }\left( x\right) \text{ }^{\mathbf{C}}\mathfrak{D}^{\alpha -m}g\left( x\right) +\Omega^{2}_{f,g}(\alpha,a) \notag \\ 
&=&\text{ }^{\mathbf{C}}\mathfrak{D}_{a+}^{\alpha }\left( fg\right) \left( x\right),
\end{eqnarray*}
with
$$\Omega^{2}_{f,g}(\alpha,a)=g\left( a\right) \left( f^{\left( k\right) }\left( x\right) -f^{\left( k\right) }\left( a\right) \right) 
\dfrac{\left(x-a\right) ^{-\alpha }}{\Gamma \left( 1-\alpha \right) }.
$$

\item Considering $\Psi \left( x\right) =x$ and substituting in Eq.(\ref{eq22}), we get the Leibniz rule for the Hilfer fractional derivative 
\begin{eqnarray*}
^{\mathbf{H}}\mathfrak{D}_{a+}^{\alpha ,\beta ;x}\left( fg\right) \left( x\right)&=&\overset{\infty }{\underset{m=0}{\sum }}\left( 
\begin{array}{c}
\alpha  \\ 
m%
\end{array}
\right) f^{\left( m\right) }\left( x\right) \text{ }^{\mathbf{H}}\mathfrak{D}_{a+}^{\alpha -m,\beta }g\left( x\right) +\Omega^{3}_{f,g}(\alpha,\beta,a) \\
&=&\text{ }^{\mathbf{H}}\mathfrak{D}_{a+}^{\alpha ,\beta }\left( fg\right) \left( x\right),
\end{eqnarray*}
with 
$$
\Omega^{3}_{f,g}(\alpha,\beta,a)= \underset{k=0}{\overset{\infty }{\sum }}\left( 
\begin{array}{c} -\xi  \\ k\end{array}\right) \mathcal{I}_{a+}^{\xi +k}g\left( a\right) \left( f^{\left( k\right) }\left( x\right) -f^{\left( k\right) }
\left( a\right) \right) \dfrac{\left( x-a\right) ^{\xi-\alpha}}{\Gamma \left( \beta -\alpha \beta \right) }.
$$

\item Considering $\Psi \left( x\right) =\ln x$ and taking the limit $\beta \rightarrow 1$ in both sides of Eq.(\ref{eq22}), we obtain the Leibniz rule for the Hadamard 
fractional derivative
\begin{eqnarray*}
^{\mathbf{H}}\mathfrak{D}_{a+}^{\alpha ,1;\ln x}\left( fg\right) \left( x\right) &=&\overset{\infty }{\underset{m=0}{\sum }}\left( 
\begin{array}{c}
\alpha  \\ 
m%
\end{array}%
\right) f^{\left( m\right) }\left( x\right) \text{ }^{\mathbf{HD}}\mathfrak{D}_{a+}^{\alpha -m}g\left( x\right) + \Omega^{4}_{f,g}(\alpha,a)  \\
&=&\text{ }^{\mathbf{HD}}\mathfrak{D}_{a+}^{\alpha }\left( fg\right) \left( x\right),
\end{eqnarray*}
with 
$$
\Omega^{4}_{f,g}(\alpha,a)=g\left( a\right) \left( f^{\left( k\right) }\left( x\right) -f^{\left( k\right) }\left( a\right) \right) 
\dfrac{\left( \ln x-\ln a\right) ^{-\alpha }}{\Gamma \left( 1-\alpha \right) }.
$$

\item Considering $\Psi \left( x\right) =x^{\rho }$ and taking the limit $\beta \rightarrow 0$ in both sides of Eq.(\ref{eq22}), we get a Leibniz type rule for the 
Katugampola fractional derivative
\begin{eqnarray}
^{\mathbf{H}}\mathfrak{D}_{a+}^{\alpha ,0;x^{\rho }}\left( fg\right) \left( x\right) 
&=&\overset{\infty }{\underset{m=0}{\sum }}\left( 
\begin{array}{c}
\alpha  \\ 
m%
\end{array}%
\right) f^{\left( m\right) }\left( x\right) \text{ }\frac{1}{\rho ^{\alpha }}\text{ }^{\mathbf{K}}\mathfrak{D}_{a+}^{\alpha -m}g\left( x\right)   \notag \\
&&+\underset{k=0}{\overset{\infty }{\sum }}\left( 
\begin{array}{c}
\alpha -1 \\ 
k%
\end{array}%
\right) \mathcal{I}_{a+}^{\left( 1-\alpha \right) +k}g\left( a\right) \left( f^{\left( k\right) }\left( x\right) -f^{\left( k\right) }\left( a\right) \right) 
\left( x^{\rho }-a^{\rho }\right) ^{-1}  \notag \\
&=&\overset{\infty }{\underset{m=0}{\sum }}\left( 
\begin{array}{c}
\alpha  \\ 
m%
\end{array}%
\right) f^{\left( m\right) }\left( x\right) \text{ }\frac{1}{\rho ^{\alpha }}\text{ }^{\mathbf{K}}\mathfrak{D}^{\alpha -m}g\left( x\right) =
\text{ }\frac{1}{\rho ^{\alpha }}\text{ }^{\mathbf{K}}\mathfrak{D}_{a+}^{\alpha }\left( fg\right) \left( x\right) .
\end{eqnarray}

\item Considering $\Psi \left( x\right) =\ln x$ and taking the limit $\beta \rightarrow 1$ in both sides of Eq.(\ref{eq22}), we get a Leibniz type rule for the 
Caputo-Hadamard fractional derivative
\begin{eqnarray}
^{\mathbf{H}}\mathfrak{D}_{a+}^{\alpha ,1;\ln x}\left( fg\right) \left( x\right)  &=&\overset{\infty }{\underset{m=0}{\sum }}\left( 
\begin{array}{c}
\alpha  \\ 
m%
\end{array}%
\right) f^{\left( m\right) }\left( x\right) \text{ }^{\mathbf{CH}}\mathfrak{D}^{\alpha
-m}g\left( x\right)   \notag \\
&&+\underset{k=0}{\overset{\infty }{\sum }}\left( 
\begin{array}{c}
0 \\ 
k%
\end{array}%
\right) \mathcal{I}_{a+}^{k}g\left( a\right) \left( f^{\left( k\right) }\left( x\right) -f^{\left( k\right) }\left( a\right) \right) 
\frac{\left( \ln x-\ln a\right) ^{-\alpha }}{\Gamma \left( 1-\alpha \right) }  \notag \\
&=&\overset{\infty }{\underset{m=0}{\sum }}\left( 
\begin{array}{c}
\alpha  \\ 
m%
\end{array}%
\right) f^{\left( m\right) }\left( x\right)\text{ } ^{\mathbf{CH}}\mathfrak{D}_{a+}^{\alpha -m}g\left( x\right) + \Omega^{5}_{f,g}(\alpha,a) \notag \\ 
&=&\text{ }^{\mathbf{CH}}\mathfrak{D}_{a+}^{\alpha }\left( fg\right) \left( x\right),
\end{eqnarray}
with 
$$
\Omega^{5}_{f,g}(\alpha,a)=g\left( a\right) \left( f^{\left( k\right) }\left( x\right) -f^{\left( k\right) }\left( a\right) \right) 
\dfrac{\left( \ln x-\ln a\right) ^{-\alpha }}{\Gamma \left( 1-\alpha \right) }.
$$

\item Considering $\Psi \left( x\right) =x^{\rho }$ and taking the limit $\beta \rightarrow 1$ in both sides of Eq.(\ref{eq22}), we obtain a Leibniz type rule 
for the Caputo-Katugampola fractional derivative
\begin{eqnarray}
^{\mathbf{H}}\mathfrak{D}_{a+}^{\alpha ,1;x^{\rho }}\left( fg\right) \left( x\right)  &=&\overset{\infty }{\underset{m=0}{\sum }}\left( 
\begin{array}{c}
\alpha  \\ 
m%
\end{array}%
\right) f^{\left( m\right) }\left( x\right) \text{ }\frac{1}{\rho ^{\alpha }}\text{ }^{\mathbf{CK}}\mathfrak{D}_{a+}^{\alpha -m}g\left( x\right)   \notag \\ 
&&+\underset{k=0}{\overset{\infty }{\sum }}\left( 
\begin{array}{c}
0 \\ 
k%
\end{array}%
\right) \mathcal{I}_{a+}^{k}g\left( a\right) \left( f^{\left( k\right) }\left( x\right) -f^{\left( k\right) }\left( a\right) \right) 
\frac{\left( x^{\rho }-a^{\rho }\right) ^{-\alpha }}{\Gamma \left( 1-\alpha \right) }  \notag \\ &=&\overset{\infty }{\underset{m=0}{\sum }}\left( 
\begin{array}{c}
\alpha  \\ 
m%
\end{array}%
\right) f^{\left( m\right) }\left( x\right)\text{ } ^{\mathbf{CK}}\mathfrak{D}^{\alpha -m}g\left( x\right) +\Omega^{6}_{f,g}(\alpha,\rho,a)  \notag \\ 
&=&\frac{1}{\rho ^{\alpha }}\text{ }^{\mathbf{CK}}\mathfrak{D}_{a+}^{\alpha }\left( fg\right) \left( x\right),
\end{eqnarray}
with 
$$
\Omega^{6}_{f,g}(\alpha,\rho,a)=g\left( a\right) \left( f^{\left( k\right) }\left( x\right) -f^{\left( k\right) }\left( a\right) \right) 
\dfrac{\left( x^{\rho }-a^{\rho }\right) ^{-\alpha }}{\Gamma \left( 1-\alpha \right) }.
$$

\item Considering $\Psi \left( x\right) =x,$ $a=c$ and taking the limit $ \beta \rightarrow 0$ in both sides of Eq.(\ref{eq22}), we obtain a Leibniz type rule 
for the Chen fractional derivative
\begin{eqnarray}
^{\mathbf{H}}\mathfrak{D}_{c+}^{\alpha ,0;x}\left( fg\right) \left( x\right) &=&\overset{\infty }{\underset{m=0}{\sum }}\left( 
\begin{array}{c}
\alpha  \\ 
m%
\end{array}%
\right) f^{\left( m\right) }\left( x\right) \text{ }^{\mathbf{EN}}D_{c+}^{\alpha
-m}g\left( x\right)   \notag \\
&&+\underset{k=0}{\overset{\infty }{\sum }}\left( 
\begin{array}{c}
\alpha -1 \\ 
k%
\end{array}%
\right) \mathcal{I}_{c+}^{\left( 1-\alpha \right) +k}g\left( c\right) \left( f^{\left( k\right) }\left( x\right) -f^{\left( k\right) }\left( c\right) \right) 
\frac{\left( x-c\right) ^{-\alpha }}{\Gamma \left( 1-\alpha \right) }  \notag \\
&=&\overset{\infty }{\underset{m=0}{\sum }}\left( 
\begin{array}{c}
\alpha  \\ 
m%
\end{array}%
\right) f^{\left( m\right) }\left( x\right) \text{ }^{\mathbf{EN}}D_{c+}^{\alpha -m}g\left( x\right) =\text{ }^{\mathbf{EN}}D_{c+}^{\alpha }\left( fg\right) \left(x\right) .
\end{eqnarray}

\end{enumerate}

%%%%%%%%%%%%%%%%%%%%%%%%%%%%%%%%%%%%%%%%%%%%%%%%%%%%%%%%%%%%%%%%%%%%%%%%%%%%%%%%%%%%%%%%%%%%%%%%%%%%%%%%%%%%%%%%%%%%%%%%%%%%%%%%%%%%%%%%%%%%%%%%%%%%%%%%%%%%%%%%%%%%
%%%%%%%%%%%%%%%%%%%%%%%%%%%%%%%%%%%%%%%%%%%%%%%%%%%%%%%%%%%%

\section{Concluding remarks}

In this paper, we investigate Lemma \ref{lemma1} and Lemma \ref{lemma1} to calculate the derivative of the product of two functions. In this sense, we present the 
Leibniz rule for the $\Psi-$H fractional derivative, since it had not yet been investigated. From the choice of the function $\Psi(\cdot)$ and the limits $\beta \rightarrow 1$ 
and $\beta \rightarrow 0$, we have obtained a class of Leibniz and Leibniz type rules of their respective fractional derivatives.

From the Leibniz type rule presented here for the first time, the next step of the research is to investigate the Ulam-Hyers stabilities of solutions of fractional differential 
equations of the functional, impulsive and evolution types, among others. We believe that from the formulation of this Leibniz type rule, we anticipate the possibility of 
investigating other problems related to the fractional calculus.

\section{Acknowledgment}

I support financial of the PNPD-CAPES scholarship of the Pos-Graduate Program in Mathematics Applied IMECC-Unicamp.

\bibliography{ref}
\bibliographystyle{plain}

\end{document}